\newtheorem{theorem}{Theorem}[section]
\newtheorem{corollary}{Corollary}[section]
\newtheorem{definition}{Definition}[section]
\newtheorem{proposition}{Proposition}[section]
\newtheorem{example}{Example}[section]
\DeclareMathOperator*{\argmin}{argmin}
\newtheorem{remark}{Remark}[section]
\newtheorem{lemma}{Lemma}[section]
\DeclareMathOperator{\sign}{sgn}
\title{Global semiconcavity of solutions to first-order Hamilton--Jacobi equations with state constraints}
\begin{document}

\author{Yuxi Han}

\begin{abstract}
We focus on the global semiconcavity of solutions to first-order Hamilton--Jacobi equations with state constraints, especially for the Hamiltonian $H(x, \beta):=|\beta|^p-f(x)$ with $p \in (1, 2]$. We first show that the solution is locally semiconcave, and the semiconcavity constant at each point depends on the first time a corresponding minimizing curve emanating from this point hits the boundary. Then, with appropriate conditions on $Df$, we prove that for any such minimizing curve, the time it takes to hit the boundary of the domain is $+\infty$, and as a consequence, the solution is globally semiconcave. Moreover, the condition on $Df$ is essentially optimal with examples in one-dimensional space. The proofs employ the Euler-Lagrange equations and techniques in weak KAM theory.      
\end{abstract}

\keywords{first-order Hamilton--Jacobi equations, state-constraint problems, semiconcavity, optimal control theory, viscosity solutions}
\subjclass[2020]{35B65, 35D40, 35F20, 49L25}
\thanks{The author is supported by NSF CAREER grant DMS-1843320.}

\maketitle

\section{Introduction}\label{sec:intro}
In this paper, we study the semiconcavity of solutions to first-order Hamilton--Jacobi equations with state constraints. Consider the following constrained problem on an open, bounded and connected set $\Omega \subset \mathbb{R}^n$,

\begin{equation}\label{eqn:scp}
    \left\{\begin{aligned}
    u+H(x, Du) & \leq 0 \quad \text{in } \Omega, \\
    u+H(x, Du) & \geq 0 \quad \text{on } \overline{\Omega} ,
    \end{aligned}
    \right.
\end{equation}
where the function $H:\overline{\Omega} \times \mathbb{R}^n \to \mathbb{R}$ is a given continuous Hamiltonian. A function $u: \overline{\Omega} \to \mathbb{R}$ is called a constrained viscosity solution to \eqref{eqn:scp} if $u$ is a subsolution to 
\begin{equation}\label{eqn:scpeq}
    u+H(x, Du)=0
\end{equation}
in $\Omega$ and a supersolution to \eqref{eqn:scpeq} on $\overline{\Omega}$. The existence and the uniqueness of the solution $u \in \mathrm{C} (\overline{\Omega})$ to this constrained problem are guaranteed by appropriate assumptions on $H$ and $\Omega$ (see \cite{Capuzzo-Dolcetta1990,Soner1986}). If $H=H(x, \beta)$ is convex in $\beta$, then the unique constrained solution has the optimal control formula
\begin{equation}\label{eqn:ocf}
    u(x) = \inf \left\lbrace \int_0^\infty e^{-s}\left(L\left(\gamma(s), -\dot{\gamma}(s)\right)\right)ds: \gamma\in \mathrm{AC}([0,\infty);\overline{\Omega}), \gamma(0) = x\right\rbrace,
\end{equation}
where $L:\overline{\Omega} \times \mathbb{R}^n \to \mathbb{R}$ is the Legendre transform of $H$ and $\mathrm{AC}([0,\infty);\overline{\Omega})$ denotes the collection of all the absolutely continuous functions $\gamma : [0, \infty) \to \overline{\Omega}$.
Note that all the admissible paths in the representation formula above are restricted in $\overline{\Omega}$. For the special case where $H(x,\beta):= \left| \beta \right|^p -f(x)$ for $p \in (1, 2]$ and $f\in \mathrm{C}(\overline{\Omega})\cap W^{1,\infty}(\Omega)$, the Legendre transform of $H$ is $L(x, v) =C_p|v|^q+f(x)$ where $C_p = q^{-1} p^{-\frac{q}{p}} $ and $\frac{1}{p} + \frac{1}{q} = 1$. In particular, $q \geq 2$. With appropriate assumptions on $H$, for any $x$, the infimum in \eqref{eqn:ocf} is attained, that is, there exists a minimizing curve for every $x$, as is defined in Definition \ref{def:minimizer}.

We are interested in figuring out fine properties of the solution $u$, especially about whether it is globally semiconcave in the whole domain. Semiconcavity is a property that plays an essential role in optimization (see \cite{cannarsa2004semiconcave}). Roughly speaking,
it can be thought of as the Hessian of the solution $u$ being bounded from above, even though the solution $u$ may not be differentiable everywhere.

 It is not hard to show that the solution $u$ is locally semiconcave. As is proved in \cite[Theorem 16]{YuTu2022}, the local semiconcavity constant of $u$ at a specific point $x$ is bounded by the inverse of its distance to the boundary, i.e., $\mathrm{dist}(x, \partial \Omega)^{-1}$. But this does not imply global semiconcavity since $\mathrm{dist}(x,\partial \Omega)^{-1}$ blows up near the boundary. In this paper, we first improve this local bound by showing that the local semiconcavity constant of $u$ at a specific point $x$ only depends on the inverse of the time a minimizing curve emanating from $x$ takes to hit the boundary of the domain. With this new local bound, to prove the global seminconcavity of $u$, it suffices to find a lower bound of the time for minimizing curves to hit the boundary.

 To the best of our knowledge, global semiconcavity and the first time minimizing curves hit the boundary have not been investigated in the literature and are nontrivial. For instance, in the specific case where $H(x,\beta):= \left| \beta \right|^p -f(x)$ for some function $f\in \mathrm{C}(\overline{\Omega})\cap W^{1,\infty}(\Omega)$, the assumption that $f$ is semiconcave in the domain does not guarantee the solution $u$ is globally semiconcave. See an example in one-dimensional space in Section 4 where $f$ is semiconcave but the semiconcavity constant of $u$ blows up near the boundary of the domain, and in this example, all the minimizing curves take finite time to hit the boundary of the domain.

 It is proved in \cite{YuTu2022} that if $f$ can be extended to a function $\Tilde{f} \in \mathrm{C}(\mathbb{R}^n)$ by setting $\Tilde{f}=0$ in $\Omega^c$ so that $\Tilde{f}$ is semiconcave in $\mathbb{R}^n$, then $u$ is globally semiconcave (see \cite{YuTu2022}). For instance, if $f\in \mathrm{C}_c^2(\Omega)$, then $f$ can be extended to a semiconcave function on $\mathbb{R}^n$ by setting $f=0$ outside $\Omega$. We would like to figure out more relaxed conditions on $f$ that guarantee the global semiconcavity of the solution $u$. In the literature, typically, semiconcavity is proved by doubling variable method or the vanishing viscosity process. However, we believe that the global semiconcavity in our setting cannot be directly deduced via doubling variable method. Instead, we employ the ideas from optimal control and weak KAM theory to link semiconcavity with  properties of minimizing curves, in particular, the first time they hit the boundary (see \cite{evans2001effective, Fathi2014, tran_hamilton-jacobi_2021}). It turns out that suitable conditions on $f$ imply a lower bound for the first time minimizing curves hit the boundary.



\subsection{Relevant literature}
Viscosity solutions with state
constraints is of great interest and there is a lot of work in the literature dedicated to this subject. It starts with \cite{Soner1986} in the settings of optimal control theory, followed by further results like \cite{Capuzzo-Dolcetta1990, Ishii1996, ishii_class_2002}. The asymptotic behavior of solutions to \eqref{eqn:scp} in various settings can be found in \cite{ishii_vanishing_2017,kim_state-constraint_2020,mitake_asymptotic_2008,tu2021vanishing}. For local semiconcavity of Hamilton--Jacobi equations with state constraints, see \cite{CANNARSA2008616, Sinestrari1995SemiconcavityOS}. There are also various results regarding semiconcavity of different types of equations, for instance, \cite{Paolo2010, BRCQ2010, CAROFF2006287, PIGNOTTI2005197, Chen2016, Sinestrari1995SemiconcavityOS, Thomas2010}. Second-order Hamilton--Jacobi equations with state constraints are studied in various work, for instance, \cite{Lasry1989, por2004, alessio_asymptotic_2006}.

However, the question about whether the constrained solution is globally semiconcave has not been thoroughly studied in the literature. Detailed information about the first time a minimizing curve hits the boundary is barely known.

\subsection{Settings}
Before introducing the assumptions used in this paper, we first give the definitions of semiconcavity and local semiconcavity.
\begin{definition}\label{def:semi}
A function $f$ is semiconcave in an open set $\Omega \subset \mathbb{R}^n$ if there exists a constant $C \geq 0$ so that
\begin{equation*}
    f(x+h)+f(x-h)-2f(x) \leq C |h|^2
\end{equation*}
for all $x, h \in \mathbb{R}^n$ such that $[x-h,x+h] \subset \Omega$, where $[x-h,x+h]:= \left\{ x+sh \in \mathbb{R}^n: -1\leq s \leq 1 \right\}$. The constant $C$ above is  called a semiconcavity constant for $f$ in $\Omega$.
\end{definition}  

\begin{definition}
A function $f$ is locally semiconcave in an open set $\Omega \subset \mathbb{R}^n$ if for any $x\in \Omega$, there exists a constant $C_x >0$ which depends on $x$ so that
\begin{equation*}
    f(x+h)+f(x-h)-2f(x) \leq C_x|h|^2
\end{equation*}
for all $h\in \mathbb{R}^n$ such that $[x-h, x+h] \subset \Omega$ with $|h| \leq M_x$ for some constant $M_x>0$ that depends on $x$.
\end{definition}

Next, we introduce a few assumptions that have been used in different literature.

Let $\Omega \subset \mathbb{R}^n$ be an open, bounded, connected domain, and satisfy the condition

(A) there exists a universal pair of positive numbers $(r, h)$ and a function $\alpha \in \mathrm{BUC} (\overline{\Omega}; \mathbb{R}^n)$ such that
$\mathrm{B}(x+t\alpha(x),rt) \subset \Omega, \forall x \in \overline{\Omega}, t \in (0,h]$,

where $\mathrm{BUC}(\overline{\Omega}; \mathbb{R}^n)$ denotes the collection of all the bounded and uniformly continuous functions $g : \overline{\Omega} \to \mathbb{R}^n$.

Assumption (A), which is an assumption on the boundary of $\Omega$, holds true for any bounded domain with $\mathrm{C}^2$ boundary and needs to be assumed to have the comparison principle for \eqref{eqn:scpeq} (see \cite{Soner1986}). In this paper, we always assume $\Omega$ has a $\mathrm{C}^2$ boundary.

Let $H :\overline{\Omega} \times \mathbb{R}^n \to \mathbb{R
}$ be a continuous function. The following are several assumptions on $H$ that have been used in the literature.

(H1) $\displaystyle \lim_{\left|\beta\right| \to \infty}\left(\inf_{x\in \overline{\Omega}}\frac{H\left(x,\beta \right)}{\left|\beta \right|}\right) = +\infty$. 



(H2) The map $\beta \mapsto H(x, \beta)$ is convex, $\forall x \in \overline{\Omega}$.

(H3a) There exists a modulus $\omega_H:[0,+\infty) \to [0, +\infty)$, which is a nondecreasing function such that $w_H(0^+)=0$ and 
\begin{equation*}
    \left\{ \begin{aligned}
      &\left|H\left(x,\beta\right)-H(y,\beta)\right| \leq \omega_H \left(|x-y|(1+|\beta|)\right)\\
      &\left|H(x,\beta)-H\left(x,\tilde{\beta}\right)\right| \leq \omega_H \left(\left|\beta-\tilde{\beta}\right|\right)
    \end{aligned}
    \right.
\end{equation*}
$\forall x, y \in \overline{\Omega}$ and $\beta, \tilde{\beta} \in \mathbb{R}^n$.

(H3b)$\forall R>0$, there exists a modulus $\omega_R:[0,+\infty) \to [0, +\infty)$, which is nondecreasing with $\omega_R(0^+)=0$ and 
\begin{equation*}
    \left\{ \begin{aligned}
      &|H\left(x,\beta\right)-H\left(y,\beta\right)|\leq \omega_R(|x-y|)\\
      &\left|H\left(x,\beta \right)-H\left(x,\tilde{\beta} \right)\right| \leq \omega_R\left(\left|\beta-\tilde{\beta}\right|\right)
    \end{aligned}
    \right.
\end{equation*}
$\forall x, y \in \overline{\Omega}$, $\beta, \tilde{\beta} \in \mathbb{R}^n$ with $\left|\beta\right|, \left|\tilde{\beta}\right| \leq R$.

Actually, we only need $H$ to be coercive, i.e., $\lim_{\left|\beta\right| \to \infty}\left(\inf_{x\in \overline{\Omega}}H\left(x,\beta\right)\right) = +\infty$, instead of assumption (H1). Since we have a priori estimates on the solution $u$ and its gradient $Du$, i.e., $\|u\|_\infty+\|Du\|_\infty \leq C$ for some constant $C$, if $H$ does not satisfy assumption (H1), we can modify $H$ for
$\left|\beta\right|\geq C$ so that assumption (H1) is satisfied.

Assumption (H1) guarantees the existence of the constrained viscosity solution to \eqref{eqn:scp}. Detailed proof of the existence is provided in Appendix for the reader's convenience. The uniqueness of the constrained viscosity solution follows from a general comparison principle, where we need assumption (H3a) or (H3b), together with assumption (A) on the domain. The following is a general comparison principle for \eqref{eqn:scpeq}, which is stated here for completeness (see \cite{Capuzzo-Dolcetta1990, Soner1986}). 

\begin{theorem}
Assume $(\mathrm{A})$. Suppose $v_1 \in \mathrm{BUC}(\overline{\Omega};\mathbb{R})$ is a viscosity subsolution of \eqref{eqn:scpeq} in $\Omega$ and $v_2 \in \mathrm{BUC}(\overline{\Omega};\mathbb{R})$ is a viscosity supersolution of \eqref{eqn:scpeq} on $\overline{\Omega}$. If either
\begin{itemize}
    \item[\rm(i)] $(\mathrm{H}3\mathrm{a})$ holds,
or \item[\rm(ii)] $(\mathrm{H}3\mathrm{b})$ holds and $v_1$ is Lipschitz,
then $v_1(x) \leq v_2(x), \forall x \in \overline{\Omega}$.
\end{itemize}
\end{theorem}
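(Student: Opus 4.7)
The plan is to run the standard doubling-variables comparison argument for state-constrained viscosity solutions due to Soner and Capuzzo-Dolcetta--Lions. The twist over the unconstrained case is to exploit assumption (A) to keep the maximizer of the doubled functional inside $\Omega$, so that the subsolution inequality, which is known only in the open set, actually applies. I would argue by contradiction, assuming $M := \max_{\overline{\Omega}}(v_1 - v_2) > 0$.

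For small $\theta \in (0,h)$, I would invoke (A) to introduce the shifted function $v_1^\theta(x) := v_1(x + \theta\alpha(x))$, which is well-defined on $\overline{\Omega}$ because $x + \theta\alpha(x) \in \Omega$ by (A), and converges uniformly to $v_1$ as $\theta \to 0$ by the uniform continuity of $v_1$ and $\alpha$. For small $\epsilon > 0$, maximize
\[
\Phi_{\epsilon,\theta}(x,y) := v_1^\theta(x) - v_2(y) - \frac{|x-y|^2}{\epsilon^2}
\]
over $\overline{\Omega} \times \overline{\Omega}$ and let $(x_{\epsilon,\theta}, y_{\epsilon,\theta})$ be a maximizer. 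Standard penalization estimates give $|x_{\epsilon,\theta}-y_{\epsilon,\theta}| \to 0$ and $\Phi_{\epsilon,\theta}(x_{\epsilon,\theta}, y_{\epsilon,\theta}) \to \max(v_1^\theta - v_2) \to M$ as $\epsilon,\theta \to 0$. Crucially, $\widetilde{x} := x_{\epsilon,\theta} + \theta\alpha(x_{\epsilon,\theta}) \in \Omega$ at distance $\geq r\theta$ from $\partial\Omega$ by (A), so the subsolution inequality for $v_1$ applies at $\widetilde{x}$ with test gradient $\widetilde{p} = (I + \theta D\alpha(x_{\epsilon,\theta}))^{-T} p$, where $p = 2(x_{\epsilon,\theta}-y_{\epsilon,\theta})/\epsilon^2$, while the supersolution inequality for $v_2$ at $y_{\epsilon,\theta}$ uses $p$ directly and is valid everywhere on $\overline{\Omega}$. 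Subtracting and using $v_1^\theta \to v_1$ uniformly yields, up to corrections vanishing with $\theta$,
\[
v_1(\widetilde{x}) - v_2(y_{\epsilon,\theta}) \leq H(y_{\epsilon,\theta}, p) - H(\widetilde{x}, \widetilde{p}).
\]

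The final step is to control the right-hand side via (H3a) or (H3b). In case (i), the bound $\omega_H(|y_{\epsilon,\theta}-\widetilde{x}|(1+|p|)) + \omega_H(|\widetilde{p}-p|)$ vanishes because $|x_{\epsilon,\theta}-y_{\epsilon,\theta}|^2/\epsilon^2 = o(1)$ forces $|x_{\epsilon,\theta}-y_{\epsilon,\theta}|\,|p| \to 0$ and the shift error is $O(\theta|p|)$, handled by sending $\epsilon \to 0$ first and then $\theta \to 0$. In case (ii), the Lipschitz bound on $v_1$ translates into a uniform bound $|p| \leq \mathrm{Lip}(v_1)$, so the weaker modulus $\omega_R$, applied only to the vanishing quantity $|y_{\epsilon,\theta}-\widetilde{x}|$, suffices. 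In either case the right-hand side tends to $0$ while the left-hand side tends to $M > 0$, the desired contradiction. The main obstacle is coordinating the double limit so that the shift error $\theta|p|$ is controlled, which is precisely why either the growth-tolerant modulus of (H3a) or the Lipschitz bound on $v_1$ in case (ii) is needed; a secondary nuisance is the mere continuity of $\alpha$, handled by a routine mollification that does not affect the limiting inequality.
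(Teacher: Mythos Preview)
The paper does not prove this theorem; it is stated for completeness with citations to Capuzzo-Dolcetta--Lions (1990) and Soner (1986), so there is no in-paper proof to compare against. Your sketch follows the standard doubling-variables strategy from exactly those references, and the overall architecture---use assumption (A) to push the subsolution test point into $\Omega$, then run the usual penalization---is the right one.

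One point needs tightening in case (i). The shift error appears through terms like $\omega_H\bigl(\theta\|\alpha\|_\infty(1+|p|)\bigr)$ and $\omega_H\bigl(C\theta|p|\bigr)$ with $|p|=2|x_{\epsilon,\theta}-y_{\epsilon,\theta}|/\epsilon^2$. Sending $\epsilon\to 0$ first with $\theta$ fixed does not kill these: the standard estimate gives only $|x_{\epsilon,\theta}-y_{\epsilon,\theta}|=o(\epsilon)$, so $|p|=o(1/\epsilon)$ may still blow up, and $\theta|p|$ need not vanish. The cure is either to couple the parameters, e.g.\ take $\theta=\epsilon^2$ so that $\theta|p|=2|x_{\epsilon,\theta}-y_{\epsilon,\theta}|\to 0$, or---as in the cited papers---to shift by a \emph{constant} vector $\theta\alpha(\bar z)$ with $\bar z$ a fixed maximizer of $v_1-v_2$, which also removes the need to mollify $\alpha$ and the chain-rule factor $(I+\theta D\alpha)^{-T}$. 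In case (ii) your argument is fine: the Lipschitz bound on $v_1$ bounds the test gradient $\tilde p$ at $\tilde x$, hence $|p|\le(1+C\theta)\,\mathrm{Lip}(v_1)$ is uniformly bounded, and (H3b) applies directly.
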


Note that (H3b) is weaker than (H3a). Moreover, if we assume (H1), then any subsolution of \eqref{eqn:scpeq} is Lipschitz. Hence, for the uniqueness of the constrained viscosity solution, we only need to assume (H3b) with (H1).


In this paper, we mainly focus on the case where $H(x, \beta) = |\beta|^p -f(x)$ for $p \in (1, 2]$ and $f \in \mathrm{C}(\overline{\Omega}) \cap W^{1, \infty}(\Omega)$ which is semiconcave in $\Omega$ with $f\equiv \min_{x\in\overline{\Omega}} f(x)$ on $\partial \Omega$. It turns out that in this case, under appropriate assumptions on $Df$, we can prove the global semiconcavity of the solution $u$ (see details in Theorem \ref{thm:main}). Furthermore, the conditions on $Df$ turn out to be essentially optimal, at least for $p=q=2$ (see Example \ref{ex:nex}). 

Finally, as is mentioned before, to prove the global semiconcavity, we need to carefully study the first time minimizing curves hit the boundary, the definition of which is stated below.

\begin{definition}\label{def:minimizer}
Given $x\in \overline{\Omega}$, $\xi \in \mathrm{AC}([0,+\infty); \overline{\Omega})$ is called a minimizing curve emanating from $x$ if
\begin{equation*}
    \xi \in \argmin_{\begin{aligned}\gamma\in \mathrm{AC}&([0,\infty);\overline{\Omega}),\\\gamma(0) &= x\end{aligned}}  \int_0^\infty e^{-s}\left(L\left(\gamma(s), -\dot{\gamma}(s)\right)\right)ds,
\end{equation*}
where $L$ is the Legendre transform of the Hamiltonian $H$ in \eqref{eqn:scp}.
Furthermore, we define the first time $\xi$ hits the boundary by
\begin{equation*}
        T_{x, \xi} : = \inf \left\{ s \in [0, +\infty) : \xi (s) \in \partial \Omega \right\},
\end{equation*}
where we take the convention $\inf{\emptyset} = +\infty$.
\end{definition}

\subsection{Main results and proof strategies}

Now, we present our main results about the global semiconcavity of $u$.
\begin{theorem}\label{thm:main}
Let $\Omega \subset \mathbb{R}^n$ be an open, bounded, connected domain with $C^2$ boundary. Suppose $H(x, \beta) = |\beta|^p -f(x)$ for some $p \in (1, 2]$, $f \in \mathrm{C}^1(\overline{\Omega})$ which is semiconcave in $\Omega$. Assume 
\begin{itemize}
    \item[\rm(1)] $f(x) \equiv \min_{x\in \overline{\Omega}}f(x)$ for $ x \in \partial \Omega$,
    \item[\rm(2)] $f(x) > \min_{x\in \overline{\Omega}}f(x)$ for $x \in \Omega$,
    \item[\rm(3)] there exists a constant $C>0$ such that $\left|Df(x)\right| \leq C \left(f(x)-\min_{x\in \overline{\Omega}}f(x)\right)^\frac{1}{p}$ for all $x \in \Omega$. 
\end{itemize} 
Then, the solution $u$ to \eqref{eqn:scp} is a viscosity solution to 
\begin{equation}
    -D^2u \geq \tilde{C} \mathrm{I}_n \quad \text{ in } \Omega
\end{equation}
for some constant $\tilde{C}$, where $\mathrm{I}_n$ is the $n \times n$ identity matrix.
\end{theorem}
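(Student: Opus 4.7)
The plan is to combine two ingredients: first, the local semiconcavity estimate obtained earlier in the paper, which bounds the local semiconcavity constant at $x$ by a quantity involving $1/T_{x,\xi}$; and second, a proof that $T_{x,\xi}=+\infty$ for every $x\in\Omega$ and every minimizing curve $\xi$ emanating from $x$. Once the hitting time is infinite, the local bound automatically upgrades to the global statement $-D^2u\geq\tilde C\,\mathrm{I}_n$ in the viscosity sense, so all of the real work lies in ruling out a finite first hitting time of $\partial\Omega$ for minimizers.

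For the hitting-time bound I would work along a minimizer via the Hamiltonian system. The Euler--Lagrange equation for the discounted action with $L(x,v)=C_p|v|^q+f(x)$ produces an adjoint arc $\eta(s)$ satisfying
\[
\dot\xi(s)=-\eta(s)\,|\eta(s)|^{p-2},\qquad \dot\eta(s)=\eta(s)-Df(\xi(s)),
\]
and standard weak KAM arguments identify $\eta(s)$ with a value of $Du$ along $\xi$, so that the HJB relation $u+|Du|^p-f=0$ reads pointwise as $|\eta(s)|^p=f(\xi(s))-u(\xi(s))$. Since $L\geq\min_{\overline\Omega}f$, the representation \eqref{eqn:ocf} gives $u\geq\min_{\overline\Omega}f$, whence
\[
|\eta(s)|^p\leq f(\xi(s))-\min_{\overline\Omega}f.
\]

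Setting $\phi(s):=f(\xi(s))-\min_{\overline\Omega}f\geq 0$, using $|\dot\xi|=|\eta|^{p-1}$, and invoking assumption (3), I compute
\[
|\dot\phi(s)|\leq |Df(\xi(s))|\,|\dot\xi(s)|\leq C\,\phi(s)^{1/p}\cdot\phi(s)^{(p-1)/p}=C\,\phi(s).
\]
A Gronwall/ODE-comparison argument then gives $\phi(s)\geq\phi(0)\,e^{-Cs}$ for as long as $\xi$ remains in $\Omega$. Assumption (2) gives $\phi(0)>0$ for every $x\in\Omega$, and assumption (1) identifies $\{\phi=0\}$ with $\partial\Omega$, so a continuity argument rules out $\xi$ ever reaching $\partial\Omega$; consequently $T_{x,\xi}=+\infty$, closing the argument.

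The main obstacle, I expect, will be the rigorous bookkeeping around the adjoint arc and the pointwise identity $|\eta(s)|^p=f(\xi(s))-u(\xi(s))$ when $u$ is only Lipschitz: this requires interior $C^{1,\alpha}$-type regularity of minimizers together with a careful reading of dynamic programming to promote the viscosity inequality to an equality along $\xi$. One must also phrase the Gronwall step so that it operates only on the interval where $\xi$ stays in the open set $\Omega$, but since $\xi$ leaving $\Omega$ would itself contradict the Gronwall conclusion, no circularity arises; the delicate point is merely to write down the argument cleanly.
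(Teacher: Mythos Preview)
Your plan is correct, and the Gronwall argument you outline for $T_{x,\xi}=+\infty$ is genuinely different from what the paper does, and in fact more direct. Both approaches share Step~1 (the local semiconcavity bound in terms of $1/T_{x,\xi}$) and both rely on the same weak KAM lemma that $u$ is differentiable along minimizers with the classical HJB relation $|Du(\xi(s))|^p=f(\xi(s))-u(\xi(s))$ holding there. The divergence is in Step~2. The paper argues by contradiction: assuming $T_{x,\xi}<\infty$, it integrates the Euler--Lagrange equation by parts to derive the identity $\int_{t_1}^{T_{x,\xi}}p\bigl(f(\xi)-u(\xi)\bigr)\,ds=u(\xi(t_1))$, then uses condition~(3) only indirectly, to build a subsolution $c_0f$ so that $c_0f\le u\le f$, which converts the identity into a uniform positive lower bound for $T_{x,\xi}$; a shifting argument then produces the contradiction. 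Your route instead applies condition~(3) directly: with $\phi(s)=f(\xi(s))-\min f$ and the bound $|\dot\xi|\le p\,\phi^{(p-1)/p}$ (from $u\ge\min f$ and the HJB relation), one gets $|\phi'|\le Cp\,\phi$, whence $\phi(s)\ge\phi(0)e^{-Cps}>0$, so $\xi$ never reaches $\partial\Omega$.

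Your argument is cleaner in that it needs only $\xi\in C^1$, whereas the paper's integration by parts forces a separate discussion of where $\xi$ is $C^2$ (handled there via a case analysis at points where $\dot\xi$ vanishes). The paper's route, on the other hand, isolates the integral identity and the comparison $c_0f\le u\le f$, which may be of independent use. Two minor remarks on your writeup: the characteristic equation should read $\dot\xi=-p\,|\eta|^{p-2}\eta$ (there is a missing factor of $p$, harmless for the Gronwall step), and the ``interior $C^{1,\alpha}$-type regularity'' you anticipate needing is exactly the content of the paper's Lemma on $C^1$ minimizers together with the differentiability-along-minimizers proposition---nothing stronger is required.
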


\begin{remark} Two comments for Theorem \ref{thm:main} are as follows.
\begin{itemize}
    \item Condition $(3)$ is almost optimal for the global semiconcavity of the solution $u$, at least for the case $p=q=2$. In one-dimensional space, we show if Condition $(3)$ is not satisfied, the solution is not globally semiconcave. See details in Theorem \ref{thm:1d} in Section \ref{sec:one}. 
    \item In general, if $f \not\equiv \min_{x\in \overline{\Omega}} f(x)$ on $\partial \Omega$, one has to study the dynamics of the minimizing curves and the equations to determine whether $u$ is semiconcave or not. In one-dimensional space, two specific examples are given in Section \ref{sec:one} to illustrate that if the minimum of $f$ is only attained in the interior, then different situations can happen.
\end{itemize}

\end{remark}

In the literature, it has been proved that $u$ is at lease locally semiconcave in $\Omega$. In particular, as is shown in \cite[Theorem 16]{YuTu2022}, the local semiconcavity constant is bounded by $\mathrm{dist}(x, \partial \Omega)^{-1}$. However, we emphasize that the direct application of this local bound fails to prove the global semiconcavity of $u$ because $\mathrm{dist}(x, \partial \Omega)^{-1}$ blows up when $x$ is close to the boundary. Thus, to prove Theorem \ref{thm:main}, two major steps are taken in this paper:
\begin{enumerate}[wide, labelwidth=!, labelindent=0pt] 
    \item[\textbf{Step 1: local semiconcavity of $u$.}] In this step, we prove that there exists a more intuitive bound for the local semiconcavity constant of $u$, as is shown in the following proposition.
\begin{proposition}\label{prp:localsemi}
Let $\Omega \subset \mathbb{R}^n$ be an open, bounded, connected domain with $\mathrm{C}^2$ boundary. Suppose 
\begin{itemize}
    \item[\rm(1)] $H\in \mathrm{C}^2\left(\overline{\Omega} \times \mathbb{R}^n \right)$ satisfies $(\mathrm{H}1)$ and $(\mathrm{H}2)$, or
    \item[\rm(2)] $H(x, \beta) := |\beta|^p -f(x)$ for some $p \in (1, 2]$ and $f \in \mathrm{C}(\overline{\Omega}) \cap W^{1, \infty}(\Omega)$ which is semiconcave in $\Omega$.
\end{itemize}
Let $u$ be the constrained viscosity solution to \eqref{eqn:scp}. Fix $x \in \Omega$ and let $\xi \in \mathrm{AC}([0,+\infty); \overline{\Omega})$ be a minimizing curve emanating from x as is defined in Definition \ref{def:minimizer}. Then, for every $T<T_{x, \xi}$, there exists a constant $M_{x,T} >0$ that depends on $x, T$ and a constant $C$ independent of $x, T$ so that
\begin{equation}\label{ieq:ls}
    u(x+h)-2u(x)+u(x-h) \leq C \left(1+\frac{1}{T}\right) |h|^2
\end{equation}
for all $h \in \mathbb{R}^n$ such that $[x-h, x+h] \subset \Omega$ with $|h| \leq M_{x,T}$.
\end{proposition}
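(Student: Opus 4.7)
The plan is to perturb the minimizing curve $\xi$ linearly in $h$ using a cutoff, use the resulting curves as competitors in the optimal control formula \eqref{eqn:ocf}, and choose the cutoff so the resulting excess action degrades only like $1 + 1/T$ rather than like $1/\mathrm{dist}(x,\partial\Omega)$.

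Fix $x \in \Omega$, a minimizing curve $\xi$ emanating from $x$, and $T < T_{x,\xi}$. Set $T_0 := \min(T,1)$. Since $\xi([0,T_0]) \subset \Omega$ is compact, $d := \mathrm{dist}(\xi([0,T_0]),\partial\Omega) > 0$; take $M_{x,T} := d/2$. With the cutoff $\phi(s) := (1 - s/T_0)_+$, define
\begin{equation*}
\xi^{\pm}(s) := \xi(s) \pm h\phi(s),
\end{equation*}
which satisfy $\xi^\pm(0) = x \pm h$, coincide with $\xi$ for $s \geq T_0$, and lie in $\overline{\Omega}$ for $s \in [0,T_0]$ whenever $|h| \leq M_{x,T}$. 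Using $\xi^\pm$ as test curves and subtracting $2u(x)$,
\begin{equation*}
u(x+h) + u(x-h) - 2u(x) \;\leq\; \int_0^{T_0} e^{-s}\bigl[L(\xi^+,-\dot\xi^+) + L(\xi^-,-\dot\xi^-) - 2L(\xi,-\dot\xi)\bigr]\,ds,
\end{equation*}
since the tails beyond $T_0$ cancel. Note that on $[0,T_0]$ one has $-\dot\xi^\pm = -\dot\xi \pm h/T_0$.

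In case (2), $L(x,v) = C_p|v|^q + f(x)$ with $q \geq 2$, and the integrand splits. The $f$-piece is bounded by $C_f\phi(s)^2|h|^2$ using semiconcavity of $f$ (and the fact that $M_{x,T}$ was chosen so the full segment $[\xi(s)-h\phi(s),\xi(s)+h\phi(s)]$ lies in $\Omega$). The kinetic piece equals $C_p\bigl[|v+a|^q + |v-a|^q - 2|v|^q\bigr]$ with $v := -\dot\xi$ and $|a| = |h|/T_0$; since $u$ is Lipschitz, $|\dot\xi|$ is bounded by a universal constant via the first-order necessary conditions $Du(\xi) = D_v L(\xi,-\dot\xi)$ for the discounted action and coercivity of $L$ in $v$, so $C^2$-regularity of $|\cdot|^q$ on the relevant bounded range yields $\leq C|h|^2/T_0^2$. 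Case (1) is handled identically once one observes that the Legendre transform $L = H^*$ of a $C^2$-coercive convex Hamiltonian has uniform second-derivative bounds on the bounded velocity range attained by minimizers. Using $\int_0^{T_0} e^{-s}\phi^2\,ds \leq T_0$ and $\int_0^{T_0} e^{-s}\,ds \leq T_0$,
\begin{equation*}
u(x+h)+u(x-h)-2u(x) \;\leq\; C|h|^2\Bigl(T_0 + \frac{1}{T_0}\Bigr) \;\leq\; C|h|^2\Bigl(1 + \frac{1}{T}\Bigr),
\end{equation*}
the last step because $T_0 = \min(T,1) \leq 1$ forces $T_0 + 1/T_0 \leq 1 + 1/T_0 \leq 2(1 + 1/T)$.

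The two places I expect resistance are: (i) establishing the universal bound on $|\dot\xi|$, independent of $x$ and $T$ — this relies on the $L^\infty$ estimate for $Du$ from standard viscosity theory together with the Pontryagin/Euler--Lagrange relation, and in case (2) with $q > 2$ needs care since $|\cdot|^q$ is only $C^2$ away from $0$; and (ii) ensuring the constant $C$ in the final bound truly does not depend on $x$ or $T$ — this is why $M_{x,T}$ is chosen so every chord $[\xi(s)-h\phi(s),\xi(s)+h\phi(s)]$ stays inside $\Omega$, and why the semiconcavity constant $C_f$ of $f$ and the $D^2 L$ bound can then be invoked globally. The shrinkage of $M_{x,T}$ as $\xi$ approaches $\partial\Omega$ is intrinsic to the local nature of the conclusion and is precisely what prevents this proposition alone from delivering global semiconcavity.
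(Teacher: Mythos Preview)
Your proposal is correct and follows essentially the same route as the paper: perturb the minimizer linearly via $\xi(s)\pm(1-s/T)h$, use these as competitors in the optimal control formula, and estimate the symmetric second difference of $L$ by Taylor expansion (case~(1)) or by splitting into the semiconcave $f$-piece and the $|\cdot|^q$-kinetic piece (case~(2)), with $\|\dot\xi\|_\infty\le C$ supplying the uniform velocity bound. The only cosmetic difference is that the paper perturbs on the full interval $[0,T]$ and exploits $\int_0^T e^{-s}\,ds\le 1$ to absorb the $T$-dependence of the $x$-terms, whereas you truncate at $T_0=\min(T,1)$ to the same effect; both yield the $C(1+1/T)$ constant.
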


\begin{remark}
Later, we only consider case $(2)$ of $H$ in the above proposition to prove the global semiconcavity of $u$. Case $(1)$ is less restrictive in terms of the form of $H$ and might also have its own applications, hence we include it here as well.  
\end{remark}


The proof of this proposition is provided in Section \ref{sec:local}. According to this proposition, the local semiconcavity constant of $u$ is bounded by the inverse of the time that a minimizing curve takes to hit the boundary. Compared with previous results~\cite{YuTu2022} or Corollary \ref{cor:dis}, where the semiconcavity constant is bounded by the inverse of $\mathrm{dist}(x, \partial \Omega)$, \eqref{ieq:ls} provides a better bound for the semiconcavity constant when $x$ is close to the boundary of the domain. More specifically,  when $\mathrm{dist}(x, \partial \Omega)$ is extremely small, although $\mathrm{dist}(x, \partial \Omega)^{-1}\gg 1$, the minimizing curves for $u(x)$ may have relatively slow speed and take a long time to hit the boundary, which makes $1/T$ remain bounded.

    \item[\textbf{Step 2: lower bound for the hitting time.}] From \eqref{ieq:ls} in Proposition \ref{prp:localsemi}, in order to prove global semiconcavity, it suffices to find a uniform lower bound for the first time minimizing curves hit the boundary. We prove that for any $x \in \Omega$, any minimizing curve $\xi$ takes infinite time to hit the boundary of the domain via the Euler-Lagrange equations, that is, $T_{x,\xi}=+\infty$, where $T_{x,\xi}$ is defined in Definition \ref{def:minimizer}. This result is summarized in the following proposition.

\begin{proposition}\label{prp:Tinfty}
Let $\Omega \subset \mathbb{R}^n$ be an open, bounded, connected domain with $C^2$ boundary. Suppose $H(x, \beta) = |\beta|^p -f(x)$ for some $p \in (1, 2]$, $f \in \mathrm{C}^1(\overline{\Omega})$ which is semiconcave in $\Omega$. Assume 
\begin{itemize}
    \item[\rm(1)] $f(x) \equiv \min_{y\in \overline{\Omega}}f(y)$ for all $ x \in \partial \Omega$,
    \item[\rm(2)] $f(x) > \min_{y\in \overline{\Omega}}f(y)$ for all $x \in \Omega$,
    \item[\rm(3)] there exists a constant $C>0$ such that $|Df(x)| \leq C \left(f(x)-\min_{x\in \overline{\Omega}}f(x)\right)^\frac{1}{p}$ for all $x \in \Omega$. 
\end{itemize} 
Let $u$ be the constrained viscosity solution to \eqref{eqn:scp}. Fix $x \in \Omega$ and let $\xi(\cdot)$ be a minimizing curve for $u(x)$. Then, $T_{x, \xi}=+\infty$, that is, any minimizing curve cannot escape the domain $\Omega$ in finite time. 
\end{proposition}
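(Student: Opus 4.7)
The plan is to argue by contradiction. Suppose $T := T_{x,\xi} < \infty$ and write $\gamma := \xi$. By conditions (1)--(2) together with $\gamma(0) = x \in \Omega$ and $\gamma(T) \in \partial \Omega$, the nonnegative quantity
\begin{equation*}
g(s) := f(\gamma(s)) - \min_{\overline{\Omega}} f
\end{equation*}
satisfies $g(0) > 0$ and $g(T) = 0$. The whole argument will reduce to establishing a Gronwall-type inequality $|\dot g(s)| \le C'\, g(s)$ on $[0, T)$: if that holds, then $g(s) \ge g(0)\, e^{-C' s}$ on $[0, T)$, and passing to $s \to T^-$ contradicts $g(T) = 0$.

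This inequality will come from two ingredients that multiply together exactly because $1/p + 1/q = 1$. The first is the sharp speed estimate $|\dot\gamma(s)| \le p\, g(s)^{1/q}$ a.e.\ on $[0, T)$. I would derive this from the identity $|\dot\gamma|^q = p^q(f(\gamma) - u(\gamma))$ via Legendre duality along the minimizer: since $u$ is differentiable at $\gamma(s)$ for $s \in (0, T)$ (by local semiconcavity from Proposition \ref{prp:localsemi} together with the standard weak KAM fact that an outgoing optimal velocity selects a unique element of the superdifferential of $u$), one has $Du(\gamma(s)) = -C_p q\, |\dot\gamma(s)|^{q-2}\dot\gamma(s)$; plugging $|Du|^p = f - u$ from the HJ equation and using $(C_p q)^p = p^{-q}$ yields the identity. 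The uniform bound $u \ge \min_{\overline{\Omega}} f$, obtained from the representation \eqref{eqn:ocf} since the integrand is pointwise $\ge \min f$, then gives the displayed speed estimate. The second ingredient is condition (3), namely $|Df(\gamma(s))| \le C\, g(s)^{1/p}$, so differentiating $g$ along $\gamma$ gives
\begin{equation*}
|\dot g(s)| \le |Df(\gamma(s))|\,|\dot\gamma(s)| \le C g(s)^{1/p} \cdot p\, g(s)^{1/q} = Cp\, g(s),
\end{equation*}
which is precisely the Gronwall inequality needed.

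The main obstacle is justifying the pointwise speed identity when $u$ is only semiconcave. A cleaner route that avoids direct differentiability of $u$ is to use the Euler--Lagrange equation for $\gamma$ on $[0, T)$: defining the energy $E(s) := C_p(q-1)|\dot\gamma(s)|^q - f(\gamma(s))$, the EL equation yields $\dot E = p(E + f)$, and combining this with the finiteness of the action $u(x) = \int_0^{\infty} e^{-s} L\, ds$ (a transversality condition on $e^{-s} E(s)$ at infinity) identifies $E \equiv -u(\gamma)$; the bound $u \ge \min_{\overline{\Omega}} f$ then recovers the speed estimate as before. Either route makes transparent that the exponent $1/p$ in condition (3) is exactly what is needed to trigger the collapse $g^{1/p}\cdot g^{1/q} = g$, matching the near-optimality claim stated after Theorem \ref{thm:main}.
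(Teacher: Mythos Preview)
Your argument is correct and takes a genuinely different route from the paper. You reduce everything to a Gronwall inequality for $g(s)=f(\gamma(s))-\min f$: the speed identity $|\dot\gamma|^q=p^q(f(\gamma)-u(\gamma))$ (which the paper records in Proposition~\ref{prp:ELderiH}(2), equation~\eqref{eqn:uderi}, so you can cite that rather than rederive differentiability via semiconcavity) combined with the crude bound $u\ge\min f$ gives $|\dot\gamma|\le p\,g^{1/q}$, and condition (3) then yields $|\dot g|\le Cp\,g$, forcing $g(s)\ge g(0)e^{-Cps}$ and contradicting $g(T)=0$. The paper instead integrates the Euler--Lagrange equation by parts (which requires tracking where $\xi$ is $C^2$, cf.\ Corollary~\ref{cor:xic2}), arrives at the integral identity $\int_{t_1}^{T_{x,\xi}}p(f(\xi)-u(\xi))\,ds=u(\xi(t_1))$, invokes the subsolution $c_0f\le u$ from Lemma~\ref{lem:supsub} to bound both sides, extracts a uniform lower bound $T_{x,\xi}\ge C_0$, and finishes with a time-shift argument. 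Your approach is shorter and avoids both the $C^2$-regularity bookkeeping and the subsolution lemma; the paper's approach, on the other hand, isolates the integral identity \eqref{eqn:fuu}, which may have independent interest. Both hinge on the same underlying relation $|Du(\xi)|^p=f(\xi)-u(\xi)$, and both make equally transparent why the exponent $1/p$ in condition (3) is the natural threshold.
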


    The proof of Proposition \ref{prp:Tinfty} is shown in Section \ref{sec:high}. One of the main difficulties is that it is hard to track the dynamics of minimizing curves. For a special $f$, one might solve the Euler-Lagrange equations directly to obtain the exact formula of minimizing curves, as is carried out in Appendix for Example \ref{ex:compute}, but in general, it is extremely hard to derive a useful formula of minimizing curves. To overcome this difficulty, we first track the evolution of $f$ and $u$ along the minimizing curves using the Euler-Lagrange equations and the ideas from optimal control theory and weak KAM theory (see \cite{evans2001effective, Fathi2014, tran_hamilton-jacobi_2021}). Then, we proceed by contradiction to obtain a uniform lower bound for $T_{x, \xi}$. Another technical difficulty comes from the lack of regularity of minimizing curves when $1<p<2$. As the Euler-Lagrange equations are employed, regularity of minimizing curves is essential. It is often assumed that $D_{vv}L>0$ in the study of regularity of minimizing curves for general Lagrangians in the literature (see, e.g., \cite{tran_hamilton-jacobi_2021}). In our setting, for $q>2$, $D_{vv}L$ is not positive definite, hence we need to be extremely careful with the regularity of minimizing curves. Specifically, we prove that for a minimizing curve, $\xi$, it is $C^1$, and if $\dot \xi(t) \neq 0$, then $\xi$ is twice differentiable there.
\end{enumerate}
    
Finally, from Proposition \ref{prp:Tinfty} and Proposition \ref{prp:Tinfty}, Theorem \ref{thm:main} is directly deduced and the proof is provided in Section \ref{sec:high}.

\subsection*{Organization of the paper} In Section \ref{sec:local}, we present the proof of the local semiconcavity of constrained viscosity solutions (proof of Proposition \ref{prp:localsemi}). Besides, several important properties of equation \eqref{eqn:scp} are introduced, which will be used later. The global semiconcavity is proved in Section \ref{sec:high}, before which we prove that every minimizing curve for $x\in \Omega$ takes infinite time to escape the domain (proof of Proposition \ref{prp:Tinfty}). In Section \ref{sec:one}, we show the condition on $Df$ is essentially optimal with an example in one-dimensional space. After that, two more examples are provided to illustrate that the solution $u$ can be semiconcave or not semiconcave respectively if $\min_{x\in \overline{\Omega}} f(x)$ is only attained in the interior.

\section{Preliminaries and local semiconcavity} \label{sec:local}

In this section, we prove local semiconcavity using the optimal control formula for the solution $u$. The existence of minimizing curves is well known (see \cite{tran_hamilton-jacobi_2021}). The proof of minimizing curves being $C^1$ is quite standard and we state the result as a lemma below. The proof is included in Appendix for completeness. Similar proofs can be found in \cite{cannarsa2004semiconcave}. 

\begin{lemma}\label{lem:c1}
Let $\Omega \subset \mathbb{R}^n$ be an open, bounded, connected domain with $C^2$ boundary. Suppose $H(x, \beta) := |\beta|^p -f(x)$ for some $p \in (1, 2]$ and $f \in \mathrm{C}^1(\overline{\Omega})$. Let $u$ be the constrained viscosity solution to \eqref{eqn:scp}. Fix $x \in \Omega$ and let $\xi \in \mathrm{AC}([0,+\infty); \overline{\Omega})$ be a minimizing curve for $u(x)$. Then we have the following results:
\begin{itemize}
    \item For any $t\geq 0$, if $\xi(s) \in \Omega$ for all $s\in [0,t]$, then $\xi \in \mathrm{C^1}\left([0, t]; \mathbb{R}^n \right)$.
    \item $\left|\dot{\xi} (t) \right| \leq C$ for any $t \in [0, \infty)$ and some constant $C>0$ that is independent of $x$ and $\xi$.
\end{itemize}
\end{lemma}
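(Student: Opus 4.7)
The plan is to establish both assertions by combining classical Tonelli--Weierstrass variational arguments with the DuBois--Reymond form of the Euler--Lagrange equation, adapted to the discounted infinite-horizon problem. I would address the bullets in reverse order: the uniform Lipschitz bound first, then the $C^1$ regularity on interior intervals.

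For the Lipschitz bound, I first observe that $u$ is uniformly bounded on $\overline{\Omega}$: since $L(x,v) \geq \min_{\overline{\Omega}} f$ and the constant curve $\gamma \equiv x$ gives $u(x) \leq \int_0^\infty e^{-s} f(x)\,ds \leq \|f\|_\infty$ for $x \in \Omega$ (extended to $\overline{\Omega}$ by continuity of $u$), one has $\|u\|_\infty \leq \max(\|f\|_\infty, |\min_{\overline{\Omega}} f|)$. Fixing a minimizing curve $\xi$ emanating from $x$ and invoking the dynamic programming principle
\begin{equation*}
    u(\xi(t_1)) = \int_{t_1}^{t_2} e^{-(s-t_1)} L(\xi(s), -\dot\xi(s))\,ds + e^{-(t_2-t_1)} u(\xi(t_2)),
\end{equation*}
I would compare $\xi|_{[t_1, t_2]}$ with a straight-line competitor at uniform speed between $\xi(t_1)$ and $\xi(t_2)$, which by assumption (A) can be arranged to lie in $\overline{\Omega}$ when $t_2 - t_1$ is small enough. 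The $C_p |\dot\xi|^q$ term in the Lagrangian then forces the essential supremum of $|\dot\xi|$ on $[t_1, t_2]$ to be controlled in terms of $\|u\|_\infty$ and $\|f\|_\infty$, yielding $\|\dot\xi\|_{L^\infty} \leq C$ for a constant depending only on $\|f\|_\infty$, $q$, and $C_p$.

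For the $C^1$ regularity, I would fix $[0, t]$ with $\xi([0, t]) \subset \Omega$, so by compactness $\mathrm{dist}(\xi(s), \partial\Omega) \geq \delta > 0$ on $[0, t]$. Any small $C^\infty$ variation $\eta$ compactly supported in $(0, t)$ then yields an admissible competitor $\xi + \varepsilon \eta$ lying in $\Omega$ for $\varepsilon$ small, so first-order optimality combined with the DuBois--Reymond lemma shows that the momentum
\begin{equation*}
    p(s) := D_v L(\xi(s), -\dot\xi(s)) = -q C_p |\dot\xi(s)|^{q-2}\dot\xi(s)
\end{equation*}
admits an absolutely continuous representative on $[0, t]$. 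Computing the first variation with the discount factor $e^{-s}$ present yields the Euler--Lagrange equation
\begin{equation*}
    \dot p(s) = p(s) - Df(\xi(s)) \quad \text{for a.e. } s \in [0, t],
\end{equation*}
whose right-hand side is continuous once $p$ has been replaced by its continuous representative, so that $p \in C^1([0, t]; \mathbb{R}^n)$.

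It remains to invert the Legendre map. For our Lagrangian, $v \mapsto D_v L(x, v) = q C_p|v|^{q-2}v$ is a homeomorphism of $\mathbb{R}^n$ for $q \geq 2$, with continuous inverse $\Phi(w) = (qC_p)^{-1}|w|^{(2-q)/(q-1)} w$, taken to vanish at $w = 0$. Hence $\dot\xi(s) = -\Phi(p(s))$ almost everywhere, and the right-hand side is continuous in $s$; redefining $\dot\xi$ on a null set gives $\xi \in C^1([0, t]; \mathbb{R}^n)$. The main technical subtlety is the delicate behavior of $\Phi$ near the origin when $q > 2$ (i.e. $1 < p < 2$): $\Phi$ is only H\"older continuous and not differentiable at $0$, so the inversion does not upgrade $p \in C^1$ to $\xi \in C^2$. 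This loss of smoothness at zeros of $\dot\xi$ is precisely the regularity obstruction flagged in the introduction that complicates the Euler--Lagrange analysis in Section \ref{sec:high}.
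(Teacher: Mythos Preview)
Your $C^1$ argument is essentially the paper's: derive the DuBois--Reymond (integral) form of the Euler--Lagrange equation from compactly supported first variations, obtain that $e^{-s}C_pq|\dot\xi(s)|^{q-2}\dot\xi(s)$ (hence your $p(s)$) has a continuous representative, and then invert the Legendre map $v\mapsto C_pq|v|^{q-2}v$, which is a homeomorphism of $\mathbb{R}^n$ for $q\geq 2$. The paper just cites \cite{cannarsa2004semiconcave} for this last step; your explicit inversion and the remark about the loss of differentiability of the inverse at the origin are fine.

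The Lipschitz argument, however, has a gap. The straight-line comparison between $\xi(t_1)$ and $\xi(t_2)$ does not give what you claim. First, assumption (A) is an interior cone condition, not a local convexity statement about $\overline\Omega$; it does not guarantee that the segment $[\xi(t_1),\xi(t_2)]$ lies in $\overline\Omega$, and since at this stage $\xi$ is only absolutely continuous, $|\xi(t_2)-\xi(t_1)|$ need not be small when $t_2-t_1$ is. Second, even if the segment is admissible, the comparison yields only an integral inequality: writing $v=(\xi(t_2)-\xi(t_1))/(t_2-t_1)$, Jensen gives $|v|^q\leq (t_2-t_1)^{-1}\int_{t_1}^{t_2}|\dot\xi|^q$, so the straight-line cost is \emph{at most} the cost of $\xi$ up to an $O(\|f\|_\infty(t_2-t_1))$ error, and the inequality becomes trivial rather than producing an $L^\infty$ bound on $\dot\xi$.

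The paper proceeds differently. It first uses the PDE side: coercivity of $H$ and the subsolution property give an a priori bound $\|u\|_{L^\infty}+\|Du\|_{L^\infty}\leq C$. Then, at a.e.\ $t$, one picks $\phi\in C^1$ with $u-\phi$ having a local minimum at $\xi(t)$ and $\phi(\xi(t))=u(\xi(t))$; the dynamic programming principle combined with $u(\xi(t+h))\geq\phi(\xi(t+h))$ yields, after dividing by $h$ and letting $h\to 0^+$,
\[
C_p|\dot\xi(t)|^q \leq u(\xi(t)) - D\phi(\xi(t))\cdot\dot\xi(t) - f(\xi(t)) \leq C\bigl(1+|\dot\xi(t)|\bigr),
\]
since $|D\phi(\xi(t))|\leq \|Du\|_{L^\infty}$. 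With $q\geq 2$ this forces $|\dot\xi(t)|\leq C$ uniformly in $t$, $x$, and $\xi$. You should replace the straight-line sketch by this test-function argument (or an equivalent one that actually exploits the a priori Lipschitz bound on $u$).
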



\begin{remark}
Regarding the regularity of minimizing curves, we do not know if $\xi(\cdot)$ is twice differentiable in general. But for the case $p=q=2$, Proposition \ref{prp:ELderiH} shows that $\xi(\cdot)$ is a $C^2$ function.
\end{remark}


\begin{proof}[Proof of Proposition \ref{prp:localsemi}]
Fix $x \in \Omega$ and let $\xi \in \mathrm{AC}([0,+\infty); \overline{\Omega})$ be a minimizing curve for $u(x)$. Hence, $\xi(0)=x$. Then, from the optimal control formula, 
$$u(x)=\int^\infty_0 e^{-s} L \left(\xi(s) ,-\dot{\xi}(s) \right) ds ,$$
where $L$ is the Legendre transform of $H$. Fix $T < T_{x, \xi}$, we have $\xi(s) \in \Omega, \forall s \in [0, T]$. 
Note that by the dynamic programming principle,  
\begin{equation}\label{eqn:uint}
    u(x)=\int_0^{T} e^{-s} L \left(\xi(s) ,-\dot{\xi}(s) \right) ds + e^{-T} u(\xi(T)).
\end{equation}
Define $\Tilde{\xi}:[0, +\infty)\to \mathbb{R}^n$ by
\begin{equation}\label{eqn:tildexi}
    \Tilde{\xi}(s):=\left\{
    \begin{aligned}
    &\xi(s)+\left( 1-\frac{s}{T} \right) h, \quad \text{if }  0\leq s\leq T,\\
    &\xi(s),\qquad \qquad \qquad \quad  \text{if } s \geq T,
    \end{aligned}
    \right.
\end{equation}
and $\hat{\xi}:[0, +\infty)\to \mathbb{R}^n$ by
\begin{equation}\label{eqn:hatxi}
    \hat{\xi}(s):=\left\{
    \begin{aligned}
    &\xi(s)-\left( 1-\frac{s}{T} \right) h, \quad \text{if }  0\leq s\leq T,\\
    &\xi(s),\qquad \qquad \qquad \quad  \text{if } s \geq T.
    \end{aligned}
    \right.
\end{equation}
Choose $h$ small enough so that $\Tilde{\xi}(s), \hat{\xi}(s) \in \Omega, \forall \, s\geq 0$. This can be done because there exists $r>0$ such that $B(\xi(s),r) \subset \Omega, \forall \, 0 \leq s \leq T$.
Note that $\tilde{\xi}(0)=x+h$ and $\hat{\xi}(0)=x-h$. By the optimal control formula for $u(x+h)$, $u(x-h)$ and the cost of $\tilde{\xi}$, $\hat{\xi}$, we have
\begin{equation} \label{ieq:h+}
    u(x+h)\leq \int_0^{T} e^{-s} L\left( \xi(s)+\left(1-\frac{s}{T}\right)h ,-\left(\dot{\xi}(s)-\frac{h}{T} \right) \right)ds + e^{-T} u(\xi(T)),
\end{equation}
and
\begin{equation}\label{ieq:h-}
    u(x-h)\leq \int_0^{T} e^{-s} L\left( \xi(s)-\left(1-\frac{s}{T}\right)h ,-\left(\dot{\xi}(s)+\frac{h}{T} \right) \right)ds + e^{-T} u(\xi(T)).
\end{equation}

Hence, from \eqref{eqn:uint}, \eqref{ieq:h+}, and \eqref{ieq:h-}, for $h$ small enough,
\begin{equation}\label{ieq:difquo}
\begin{aligned}
    u(x+h)+u(x-h)-2u(x) &\leq \int_0^T e^{-s} L\left( \xi(s)+\left(1-\frac{s}{T}\right)h ,-\left(\dot{\xi}(s)-\frac{h}{T} \right) \right) \\&+ e^{-s} L\left( \xi(s)-\left(1-\frac{s}{T}\right)h ,-\left(\dot{\xi}(s)+\frac{h}{T} \right) \right) -2 e^{-s} L \left(\xi(s) ,-\dot{\xi}(s) \right) ds.
\end{aligned}
\end{equation}

\begin{itemize}
    \item[(1)]If $H\in \mathrm{C}^2(\overline{\Omega} \times \mathbb{R}^n)$, then $L \in \mathrm{C}^2(\overline{\Omega} \times \mathbb{R}^n)$. By Taylor's theorem, for any $ y \in \mathbb{R}^n$ and $x \in \overline{\Omega}$,
\begin{equation}
\begin{aligned}
     L\left( x+\left( 1-\frac{s}{T}\right)h, -y+\frac{h}{T}\right)=&L(x, -y) +\frac{\partial L}{\partial x}(x, -y) \cdot \left(1-\frac{s}{T}\right)h+\frac{\partial L}{\partial v}(x, -y) \cdot \frac{h}{T}\\ 
     &+ \left(1-\frac{s}{T}\right)^2 \int^1_0 h^\top \frac{\partial^2 L}{\partial x^2}\left(x+\left(1-\frac{s}{T}\right)th, -y+\frac{h}{T}t\right)h(1-t) dt\\
     &+ \frac{2}{T}\left( 1-\frac{s}{T}\right) \int^1_0   h^\top\frac{\partial^2 L}{\partial x \partial v}\left(x+\left(1-\frac{s}{T}\right)th, -y+\frac{h}{T}t\right)h(1-t)dt\\
     &+ \frac{1}{T^2}\int^1_0  h^\top \frac{\partial^2 L}{\partial v^2}\left(x+\left(1-\frac{s}{T}\right)th, -y+\frac{h}{T}t\right)h(1-t) dt\\
     \leq& L(x, -y)+ \frac{\partial L}{\partial x}(x, -y) \cdot \left(1-\frac{s}{T}\right)h+\frac{\partial L}{\partial v}(x, -y) \cdot \frac{h}{T}\\
     &+ C\left(\left(1-\frac{s}{T}\right)^2 +\frac{1}{T}\left( 1-\frac{s}{T}\right) + \frac{1}{T^2}\right)|h|^2  \left\|D^2L\right\|_{L^\infty \left(\overline{\Omega}\times \mathrm{B}\left(-y,\frac{|h|}{T}\right) \right)}
\end{aligned}
\end{equation}
and similarly
\begin{equation}
    \begin{aligned}
     L\left( x-\left(1-\frac{s}{T}\right)h ,-y-\frac{h}{T} \right) 
\leq& L(x, -y)- \frac{\partial L}{\partial x}(x, -y) \cdot \left(1-\frac{s}{T}\right)h-\frac{\partial L}{\partial v}(x, -y) \cdot \frac{h}{T}\\
     &+ C\left(\left(1-\frac{s}{T}\right)^2 +\frac{1}{T}\left( 1-\frac{s}{T}\right) + \frac{1}{T^2}\right)|h|^2  \left\|D^2L\right\|_{L^\infty \left(\overline{\Omega}\times \mathrm{B}\left(-y,\frac{|h|}{T}\right) \right)}
    \end{aligned}
\end{equation}

which implies
\begin{equation}\label{ieq:dql}
\begin{aligned}
 &L\left( \xi(s)+\left(1-\frac{s}{T}\right)h ,-\left(\dot{\xi}(s)-\frac{h}{T} \right) \right) + L\left( \xi(s)-\left(1-\frac{s}{T}\right)h ,-\left(\dot{\xi}(s)+\frac{h}{T} \right) \right) -2L \left(\xi(s) ,-\dot{\xi}(s) \right)\\
 \leq &C\left(\left(1-\frac{s}{T}\right)^2 +\frac{1}{T}\left( 1-\frac{s}{T}\right) + \frac{1}{T^2}\right)|h|^2  \left\|D^2L\right\|_{L^\infty \left(\overline{\Omega}\times \mathrm{B}\left(-\dot{\xi}(s), 1\right) \right)}\\
 \leq & C\left(\left(1-\frac{s}{T}\right)^2 +\frac{1}{T}\left( 1-\frac{s}{T}\right) + \frac{1}{T^2}\right)|h|^2,     
\end{aligned}
\end{equation}
where we use $\left\|\dot{\xi}\right\|_\infty < C$ from Lemma \ref{lem:c1} and $h$ is chosen to be small enough so that
\begin{equation}\label{ieq:hsmall}
\left|\frac{h}{T}\right| \leq 1.    
\end{equation}

Plugging \eqref{ieq:dql} into \eqref{ieq:difquo}, we get
\begin{equation}\label{ieq:semicT}
\begin{aligned}
     u(x+h)+u(x-h)-2u(x) &\leq C |h|^2 \int_0^Te^{-s} \left( 1-\frac{s}{T}\right)^2 ds  + C |h|^2 \int_0^T \frac{e^{-s}}{T}ds+C|h|^2 \int_0^T \frac{e^{-s}}{T^2}ds\\
     & \leq C|h|^2 \int_0^T e^{-s}ds+ C\frac{|h|^2}{T}+C\frac{|h|^2}{T} \int_0^1e^{-sT}ds\\
     &\leq C\left(1+\frac{1}{T}\right)|h|^2,\\
\end{aligned}
\end{equation}
where $C=C\left(\left\|\dot{\xi}\right\|_\infty\right)$. Therefore, by Lemma \ref{lem:c1}, the constant $C$ is independent of $x, T$.
    \item[(2)] If $H(x, \beta):=|\beta|^p-f(x)$, then $L(x, v)= C_p|v|^q+f(x)$, where $C_p = q^{-1} p^{-\frac{q}{p}} $ and $\frac{1}{p} + \frac{1}{q} = 1$. Inequality \eqref{ieq:difquo} becomes
    \begin{equation}
    \begin{aligned}
    & u(x+h)+u(x-h)-2u(x) \\
    \leq &\int_0^T e^{-s} C_p \left( \left\lvert  \dot{\xi}(s)-\frac{h}{T}\right\rvert  ^q + \left\lvert  \dot{\xi}(s)+\frac{h}{T} \right\rvert  ^q -2\left\lvert  \dot{\xi}(s) \right\rvert  ^q\right) ds\\
    &+\int_0^Te^{-s}\left(f \left(\xi(s)+\left(1-\frac{s}{T}\right)h\right) + f \left(\xi(s)-\left(1-\frac{s}{T} \right)h\right) -2f\left(\xi(s)\right) \right)ds\\
    \leq &\int_0^T e^{-s} C_p \left( \left\lvert  \dot{\xi}(s)-\frac{h}{T}\right\rvert  ^q + \left\lvert  \dot{\xi}(s)+\frac{h}{T} \right\rvert  ^q -2 \left\lvert  \dot{\xi}(s)\right\rvert  ^q\right) ds\\
    &+C \left\lvert  h\right\rvert  ^2 \int_0^Te^{-s} \left( 1-\frac{s}{T}\right)^2 ds,
    \end{aligned}
    \end{equation}
where the second inequality follows from the semiconcavity of $f$. By Taylor's expansion of $|\cdot|^q$ and similar arguments as in part $(2)$, we get
\begin{equation*}
    u(x+h)+u(x-h)-2u(x)\leq C\left(1+\frac{1}{T}\right)|h|^2
\end{equation*}
for some constant $C$ independent of $x, T$ and for all $|h|$ small enough. (Also see \cite{YuTu2022} for details)
\end{itemize}

\end{proof}

Note that \eqref{ieq:ls} only holds when $|h|$ is small enough and the smallness of $|h|$ depends on $T$ from \eqref{ieq:hsmall}, and also from \eqref{eqn:tildexi}, \eqref{eqn:hatxi} where we make $\Tilde{\xi}(s), \hat{\xi}(s) \in \Omega, \forall \, s\geq 0$ by choosing $|h|$ small. 

Since the speed of minimizing curves is uniformly bounded, we can then deduce that the semiconcavity constant of $u$ at the point $x$ depends on $\mathrm{dist}(x, \partial \Omega)$, which we summarize below in Corollary \ref{cor:dis}. (See also \cite{YuTu2022}) 



\begin{corollary}\label{cor:dis}
Under the conditions of Proposition \ref{prp:localsemi}, there exists a constant $C > 0$ independent of $x \in \Omega$ so that $\forall x \in \Omega$, 
\begin{equation}
    u(x+h)-2u(x)+u(x-h)\leq \frac{C}{\mathrm{dist}(x, \partial \Omega)}|h|^2,
\end{equation}
for any $h\in\mathbb{R}^n$ such that $[x-h, x+h] \subset \Omega$ with $|h| \leq M_x$, where $M_x$ is constant that only depends on $x$.
\end{corollary}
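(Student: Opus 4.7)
The plan is to obtain this corollary as a direct consequence of Proposition \ref{prp:localsemi}, using the uniform speed estimate for minimizing curves established in Lemma \ref{lem:c1}. The whole point is to trade the dynamical hitting time $T_{x,\xi}$ in Proposition \ref{prp:localsemi} for the purely geometric quantity $\mathrm{dist}(x,\partial\Omega)$.

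First I would argue that for any $x \in \Omega$ and any minimizing curve $\xi$ emanating from $x$, the hitting time $T_{x,\xi}$ is bounded below by a multiple of $\mathrm{dist}(x,\partial\Omega)$. Lemma \ref{lem:c1} provides a constant $C_0 > 0$, independent of $x$ and $\xi$, with $|\dot{\xi}(t)| \leq C_0$ for all $t \geq 0$. Integrating from $0$ to $t$ yields $|\xi(t)-x| \leq C_0 t$, so $\xi(t) \in \Omega$ whenever $t < \mathrm{dist}(x,\partial\Omega)/C_0$. Consequently,
$$T_{x,\xi} \geq \frac{\mathrm{dist}(x,\partial\Omega)}{C_0}.$$

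Next I would apply Proposition \ref{prp:localsemi} with the concrete choice $T := \mathrm{dist}(x,\partial\Omega)/(2C_0)$, which is strictly less than $T_{x,\xi}$. Setting $M_x := M_{x,T}$, the proposition gives
$$u(x+h) - 2u(x) + u(x-h) \leq C\left(1 + \frac{2C_0}{\mathrm{dist}(x,\partial\Omega)}\right)|h|^2$$
for every admissible $h$ with $|h| \leq M_x$. Because $\Omega$ is bounded, $\mathrm{dist}(x,\partial\Omega) \leq \mathrm{diam}(\Omega)$, so the additive $1$ is absorbed by writing $1 \leq \mathrm{diam}(\Omega)/\mathrm{dist}(x,\partial\Omega)$, and the prefactor is dominated by $C'/\mathrm{dist}(x,\partial\Omega)$ for a new constant $C'$ depending only on $C_0$ and $\mathrm{diam}(\Omega)$. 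This delivers the claimed inequality.

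No serious obstacle is expected; the argument is a short bookkeeping step. The only mild subtleties to record are that $M_x$ inherits its $x$-dependence both through $T$ and through the smallness requirement $|h|/T \leq 1$ used inside the proof of Proposition \ref{prp:localsemi} (together with the requirement that the perturbed curves $\tilde{\xi},\hat{\xi}$ stay inside $\Omega$), and that the factor of $1/2$ in the choice of $T$ is entirely cosmetic — any fixed fraction strictly less than $1/C_0$ would do.
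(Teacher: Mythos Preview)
Your proposal is correct and follows essentially the same route as the paper's proof: both use the uniform speed bound from Lemma \ref{lem:c1} to choose $T$ as a fixed fraction of $\mathrm{dist}(x,\partial\Omega)$ divided by the speed bound, apply Proposition \ref{prp:localsemi}, and then absorb the additive $1$ into the $1/\mathrm{dist}$ term using the boundedness of $\Omega$. The only cosmetic difference is that the paper takes $T=\mathrm{dist}(x,\partial\Omega)/(C+1)$ rather than your $\mathrm{dist}(x,\partial\Omega)/(2C_0)$.
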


\begin{proof}
Fix $x \in \Omega$ and let $\xi \in \mathrm{AC}([0,+\infty); \overline{\Omega})$ be a minimizing curve for $u(x)$. Since $\left\|\dot{\xi}\right\|_\infty \leq C$ where $C$ is independent of $x$ and $\xi$, in Proposition \ref{prp:localsemi}, we can choose $\displaystyle T = \frac{\mathrm{dist}(x, \partial \Omega)}{C+1}$. Indeed, $\forall \, t \leq T$, 
\begin{equation*}
    \left|\xi(t)-\xi(0)\right|\leq \int_0^t\left|\dot{\xi}(s)\right|ds \leq tC \leq TC = \frac{C}{C+1} \mathrm{dist}\left(x, \partial \Omega\right) < \mathrm{dist}(x, \partial \Omega).
\end{equation*}
Hence, $\xi(t) \in \Omega$, $\forall \, t \in [0, T] $. Then from \eqref{ieq:semicT}, we have 
\begin{equation}
\begin{aligned}
     u(x+h)+u(x-h)-2u(x) & \leq C\left(1 + \frac{1}{\mathrm{dist}\left(x, \partial \Omega\right)}\right)|h|^2\\
     &\leq \frac{C}{\mathrm{dist}\left(x, \partial \Omega \right)}|h|^2,
\end{aligned}
\end{equation}
where the constant $C$ is independent of $x$.
\end{proof}



Next, we introduce some properties of equation \eqref{eqn:scpeq} and the solution $u$. The following lemma illustrates the relation between the function $f$ and the solution $u$ that they vanish at the same points, which follows from the optimal control formula of the constrained solution. See \cite{YuTu2022} for proof. 

\begin{lemma}\label{lem:f=0} Let $\Omega \subset \mathbb{R}^n$ be an open, bounded, connected domain with $\mathrm{C}^2$ boundary. Let $H(x, \beta) : = |\beta|^p -f(x)$ for some $p \in (1, 2]$ and $f \in \mathrm{C}(\overline{\Omega}) \cap W^{1, \infty}(\Omega)$ and $u$ be the constrained viscosity solution to \eqref{eqn:scp}. Assume $f\geq 0$ in $\Omega$. Then $u(x) = 0$ if and only if $f(x) = 0$. In particular, $f \equiv 0$ implies $u \equiv 0$.
\end{lemma}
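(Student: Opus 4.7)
The plan is to exploit the explicit form of the Lagrangian $L(x,v) = C_p|v|^q + f(x)$ together with the hypothesis $f \geq 0$, which forces $L \geq 0$ and hence $u \geq 0$ pointwise on $\overline{\Omega}$. With both inequalities $u \geq 0$ and $L \geq 0$ in hand, the two directions of the equivalence become symmetric: one direction is witnessed by a specific admissible curve, the other by analyzing a minimizer.

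For the direction $f(x) = 0 \Rightarrow u(x) = 0$, the plan is to test the optimal control formula \eqref{eqn:ocf} against the constant trajectory $\gamma(s) \equiv x$, which is admissible because $x \in \overline{\Omega}$. Along this curve $\dot\gamma \equiv 0$ and $f(\gamma(s)) = f(x) = 0$, so $L(\gamma(s), -\dot\gamma(s)) \equiv 0$ and the integral vanishes. This yields $u(x) \leq 0$, and combined with $u(x) \geq 0$ we get $u(x) = 0$.

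For the converse $u(x) = 0 \Rightarrow f(x) = 0$, I would invoke the existence of a minimizing curve $\xi$ emanating from $x$ (Definition \ref{def:minimizer}, whose existence is recalled at the start of Section \ref{sec:local}). The identity
\begin{equation*}
    0 = u(x) = \int_0^\infty e^{-s}\bigl(C_p|\dot\xi(s)|^q + f(\xi(s))\bigr)\,ds
\end{equation*}
then forces the non-negative integrand to vanish almost everywhere. This in turn gives $\dot\xi \equiv 0$ a.e., so $\xi(s) \equiv x$, and $f(\xi(s)) = f(x) = 0$. The final assertion $f \equiv 0 \Rightarrow u \equiv 0$ is an immediate application of the first implication at every point.

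I do not expect any serious obstacle here; the proof is really just a positivity argument driven by the non-negativity of $L$ and the availability of the constant trajectory as a test curve. The one subtle point to mention is that invoking a minimizer is legitimate because the existence of minimizers in the $H = |\beta|^p - f$ setting is guaranteed under the standing assumptions, as already recorded in the paper; otherwise one could alternatively argue by contradiction using the continuity of $f$ and Hölder's inequality to bound the cost of any curve leaving a neighborhood on which $f \geq \delta > 0$.
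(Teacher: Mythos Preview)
Your argument is correct. Note that the paper does not actually supply its own proof of this lemma but instead refers the reader to \cite{YuTu2022}; the optimal control argument you give---testing with the constant trajectory for one direction and reading off the vanishing of the non-negative integrand along a minimizer for the other---is the standard route and is almost certainly what that reference contains.
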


Regarding supersolutions and subsolutions to \eqref{eqn:scpeq}, it is straightforward to check that $f$ itself is a supersolution to \eqref{eqn:scpeq} on $\overline{\Omega}$. It turns out that it is crucial to have a subsolution to \eqref{eqn:scpeq} in $\Omega$ for the existence of a minimizing curve $\xi$ for every $x\in \Omega$ such that $T_{x, \xi} = +\infty$. If $f \equiv \min_{x\in \overline{\Omega}} f(x) = 0$ on $\partial \Omega$, the condition $|Df| \leq C f^\frac{1}{p}$ for some constant $C$ guarantees that there exists a small constant $c_0>0$ such that $c_0f$ is a subsolution to \eqref{eqn:scpeq} in $\Omega$. 

\begin{lemma}\label{lem:supsub}
Let $\Omega \subset \mathbb{R}^n$ be an open, bounded, connected domain with $\mathrm{C}^2$ boundary. Let $H(x, \beta) := |\beta|^p -f(x)$ for some $p \in (1, 2]$ and $f \in \mathrm{C}(\overline{\Omega}) \cap W^{1, \infty}(\Omega)$. Then 
\begin{itemize}
    \item[\rm(1)] $f$ is a supersolution to \eqref{eqn:scpeq} on $\overline{\Omega}$.
    \item[\rm(2)]Assume $f=0$ on $\partial \Omega$, $f>0$ in $\Omega$ and there exists a constant $C>0$ such that $|Df| \leq C f^\frac{1}{p}$ in $\Omega$. Then there exists a constant $ c_0 \in \left(0, \frac{1}{2} \right]$ such that $c_0 f$ is subsolution to \eqref{eqn:scpeq} in $\Omega$. 
\end{itemize}
\end{lemma}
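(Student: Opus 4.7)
\emph{Plan.} Both parts should follow from direct pointwise estimates once we isolate the correct differentiability setting.

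For part (1), the viscosity supersolution test at any $x_0 \in \overline{\Omega}$ asks that whenever $f-\varphi$ has a local minimum there for some $\varphi \in C^1$, the inequality $f(x_0) + |D\varphi(x_0)|^p - f(x_0) \geq 0$ holds. This collapses to $|D\varphi(x_0)|^p \geq 0$, which is automatic, so part (1) needs no hypothesis on $f$ beyond continuity.

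For part (2), I intend to use the standard equivalence for Lipschitz functions under Hamiltonians convex in the gradient variable: since $c_0 f \in W^{1,\infty}(\Omega)$ and $\beta \mapsto |\beta|^p$ is convex, $c_0 f$ is a viscosity subsolution of $u + H(x, Du) = 0$ in $\Omega$ if and only if
\begin{equation*}
c_0 f(x) + c_0^p |Df(x)|^p - f(x) \leq 0 \quad \text{for a.e. } x \in \Omega
\end{equation*}
(see, e.g., \cite{cannarsa2004semiconcave}). Using the hypothesis $|Df(x)| \leq C f(x)^{1/p}$, which gives $|Df(x)|^p \leq C^p f(x)$ almost everywhere, the left-hand side above is bounded by $(c_0 + c_0^p C^p - 1)\,f(x)$. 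Since $f \geq 0$ in $\Omega$, it therefore suffices to pick $c_0 \in (0, 1/2]$ with $c_0 + c_0^p C^p \leq 1$; because $p>1$, the map $c_0 \mapsto c_0 + c_0^p C^p$ vanishes at $0$, so a concrete choice is $c_0 := \min\{\tfrac{1}{2},\,(2C^p)^{-1/p}\}$, which yields $c_0 + c_0^p C^p \leq \tfrac{1}{2} + \tfrac{1}{2} = 1$.

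The only step requiring a pointer is the a.e.\ characterization of Lipschitz viscosity subsolutions; the remainder is algebra. No real obstacle arises here, and the argument also exposes why the exponent $1/p$ in the hypothesis on $Df$ is the critical scaling: it is exactly what lets $c_0^p |Df|^p$ be absorbed into $(1-c_0)f$ for small $c_0$, so that the term $-f(x)$ can cancel the added subsolution penalty.
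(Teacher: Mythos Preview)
Your proof is correct and follows essentially the same route as the paper: part~(1) is identical, and for part~(2) both arguments reduce to the pointwise inequality $c_0 f + c_0^p|Df|^p - f \le 0$ and make the same choice $c_0 = \min\{\tfrac12,\,2^{-1/p}C^{-1}\}$ (note $(2C^p)^{-1/p} = 2^{-1/p}/C$). The only cosmetic difference is that the paper verifies the subsolution condition directly through the superdifferential $D^+f$, whereas you invoke the a.e.\ characterization for Lipschitz subsolutions under convex Hamiltonians; either is fine here.
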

\begin{proof}
\begin{itemize}
    \item[(1)] Let $\phi \in \mathrm{C}^\infty(\overline{\Omega})$ such that $\phi - f$ attains its maximum at some point $x_0 \in \overline{\Omega}$. Then
\begin{equation*}
    f(x_0)+\left|D\phi(x_0)\right|^p -f(x_0) \geq 0.
\end{equation*}
Hence, $f$ is a supersolution to \eqref{eqn:scpeq}.
    \item[(2)]Choose $\displaystyle c_0:=\min\left\{\frac{1}{2}, 2^{-1/p}\frac{1}{C}\right\}$. Let $x \in \Omega$ and $v \in D^+f(x)$. Then $c_0 v \in D^+\{c_0 f(x)\}$. 
    \begin{equation*}
        \begin{aligned}
                 c_0f+c_0^p|v|^p-f=f\left(c_0+\frac{c_0^p|v|^p}{f} -1\right) \leq f\left(\frac{1}{2}+\frac{|v|^p}{2C^pf} -1\right) \leq 0.
        \end{aligned}
    \end{equation*}
    Therefore, $c_0f$ is a subsolution to \eqref{eqn:scpeq} in $\Omega$.
\end{itemize}
\end{proof}


\section{Global semiconcavity} \label{sec:high}

In this section, we prove the main result of global semiconcavity of the solution $u$, i.e., Theorem \ref{thm:main}. As is explained in Section \ref{sec:intro}, two steps are taken in the proof of Theorem \ref{thm:main}. Step 1 of local semiconcavity has been shown in Section \ref{sec:local}. We focus on step 2 of lower bound for the hitting time of minimizing curves in this section, namely Proposition \ref{prp:Tinfty}. 

We first derive the Euler-Lagrange equations and Hamilton's ODE for minimizing curves and also prove the fact that the constrained viscosity solution $u$ is differentiable along minimizing curves. The proof of the differentiability property of $u$ along minimizing curves relies on the construction of two smooth functions touching $u$ from both above and below and is related to weak KAM theory. We show these two smooth functions share the same gradient at the point where they touch $u$. The results are presented in the following Proposition. 


\begin{proposition}\label{prp:ELderiH}
Let $\Omega \subset \mathbb{R}^n$ be an open, bounded, connected domain with $\mathrm{C}^2$ boundary. Suppose $H(x, \beta) := |\beta|^p -f(x)$ for some $p \in (1, 2]$ and $f \in \mathrm{C}^1(\overline{\Omega})$. Let $u$ be the constrained viscosity solution to \eqref{eqn:scp}. Fix $x \in \Omega$ and let $\xi \in \mathrm{AC}([0,+\infty); \overline{\Omega})$ be a minimizing curve for $u(x)$.
\begin{itemize}
    \item[\rm(1)] Then, for $t \in \left[0, T_{x, \xi}\right)$, $\left| \dot{\xi}(\cdot)\right|^{q-2} \dot{\xi}(\cdot)$ is a $\mathrm{C}^1$ function and $\xi(\cdot)$ satisfies the Euler-Lagrange equations
\begin{equation}\label{eqn:ELp<2}
\frac{d}{dt}\left(e^{-t}C_p q \left| \dot{\xi}(t)\right|^{q-2} \dot{\xi}(t) \right)= e^{-t} Df\left(\xi(t)\right),
\end{equation}
where 
    $C_p = q^{-1} p^{-\frac{q}{p}}$ and $\frac{1}{p} + \frac{1}{q} = 1.$ In particular, if $p=2$, $\xi \in \mathrm{C}^2\left[0, T_{x, \xi}\right)$.
   \item[\rm(2)] The solution $u$ is differentiable at $\xi(t)$ for any $t \in (0, T_{x, \xi})$ and
\begin{equation}\label{eqn:uderi}
        Du\left( \xi(t) \right)=-C_pq\left|\dot{\xi}(t)\right|^{q-2} \dot{\xi}(t).
\end{equation}
\item[\rm(3)]Define $\eta(s):= D_vL\left(s, 
\xi(s), \dot{\xi}(s)\right)$ where $L(s,x,v):=e^{-s}\left(C_p|v|^{q} +f(x)\right)$ with $\displaystyle C_p = q^{-1} p^{-\frac{q}{p}}, \frac{1}{p} + \frac{1}{q} = 1$.
Then, for any $t \in \left(0, T_{x, \xi}\right)$, the pair $\left(\xi, \eta \right)$ solves Hamilton's ODE
\begin{equation}
    \left\{ \begin{aligned}
    \dot{\xi}(s)&=e^\frac{s}{q-1} p\left| \eta(s) \right|^{p-2}\eta(s) \\
    \dot{\eta}(s)&=e^{-s}Df\left( \xi(s) \right)
    \end{aligned}
    \right.
\end{equation}
for $s \in (0,t)$, with the conditions
\begin{equation*}
    \left\{\begin{aligned}
    \xi(0)&=x\\
    \eta(t)&= -e^{-t}Du\left(\xi(t)\right).
    \end{aligned}
    \right.
\end{equation*}
\end{itemize}
\end{proposition}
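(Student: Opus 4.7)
The plan is to handle the three parts in order, with Lemma \ref{lem:c1} providing the background regularity throughout: $\xi \in \mathrm{C}^1([0,t];\mathbb{R}^n)$ for every $t < T_{x,\xi}$, and since $q \geq 2$, the map $s \mapsto |\dot\xi(s)|^{q-2}\dot\xi(s)$ is continuous even where $\dot\xi$ vanishes.

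For part (1), I would first invoke the dynamic programming principle to reduce to a fixed--endpoint variational problem: for any $T\in(0,T_{x,\xi})$, the restriction $\xi|_{[0,T]}$ minimizes $\int_0^T e^{-s}(C_p|\dot\gamma|^q+f(\gamma))\,ds$ among curves with $\gamma(0)=x$ and $\gamma(T)=\xi(T)$. Since $\xi([0,T])$ is compactly contained in $\Omega$, for every $\phi\in \mathrm{C}^\infty_c((0,T);\mathbb{R}^n)$ the perturbation $\xi+\varepsilon\phi$ is admissible for small $|\varepsilon|$, and computing $\frac{d}{d\varepsilon}\big|_{\varepsilon=0}$ of the cost yields the weak Euler--Lagrange identity
\begin{equation*}
\int_0^T e^{-s}\bigl(C_p q\,|\dot\xi|^{q-2}\dot\xi\cdot\dot\phi + Df(\xi)\cdot\phi\bigr)\,ds = 0.
\end{equation*}
With $\psi(t):=\int_0^t e^{-s}Df(\xi(s))\,ds$ (which is $\mathrm{C}^1$ by Lemma \ref{lem:c1}), an integration by parts on the second term combined with the du Bois--Reymond lemma shows that $e^{-t}C_p q|\dot\xi(t)|^{q-2}\dot\xi(t) - \psi(t)$ is constant on $[0,T]$---almost everywhere by du Bois--Reymond and then everywhere by continuity. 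Differentiating yields \eqref{eqn:ELp<2} and shows that $|\dot\xi|^{q-2}\dot\xi$ is $\mathrm{C}^1$. In the special case $p=q=2$, the factor $|\dot\xi|^{q-2}$ is identically $1$, so $\dot\xi$ itself is $\mathrm{C}^1$, giving $\xi\in \mathrm{C}^2$.

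For part (2), the strategy is to sandwich $u$ near $\xi(t)$ between two smooth functions whose gradients at $\xi(t)$ converge to the same limit. Fix $t\in(0,T_{x,\xi})$ and $\varepsilon\in(0,\min\{t,\,T_{x,\xi}-t\})$. For $y$ near $\xi(t)$, define the affine--shifted competitors
\begin{equation*}
\gamma^+_y(s):=\xi(t+s)+\Bigl(1-\tfrac{s}{\varepsilon}\Bigr)(y-\xi(t)),\qquad \gamma^-_y(s):=\xi(t-\varepsilon+s)+\tfrac{s}{\varepsilon}(y-\xi(t))
\end{equation*}
on $[0,\varepsilon]$, so that $\gamma^+_y$ connects $y$ to $\xi(t+\varepsilon)$ and $\gamma^-_y$ connects $\xi(t-\varepsilon)$ to $y$. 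Applying the optimal control formula to $u(y)$ and to $u(\xi(t-\varepsilon))$ produces
\begin{equation*}
\phi^+_\varepsilon(y):=\int_0^\varepsilon e^{-s}L(\gamma^+_y,-\dot\gamma^+_y)\,ds+e^{-\varepsilon}u(\xi(t+\varepsilon)), \qquad \phi^-_\varepsilon(y):=e^\varepsilon u(\xi(t-\varepsilon))-e^\varepsilon\!\int_0^\varepsilon e^{-s}L(\gamma^-_y,-\dot\gamma^-_y)\,ds,
\end{equation*}
satisfying $\phi^-_\varepsilon\leq u\leq \phi^+_\varepsilon$ locally, with equality at $y=\xi(t)$. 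Differentiating under the integral at $y=\xi(t)$, the running terms ($Df$ carrying the coefficient $s/\varepsilon$ or $1-s/\varepsilon$) contribute $O(\varepsilon)$, whereas the terminal terms ($D_vL=-C_p q|\dot\xi|^{q-2}\dot\xi$ carrying $\pm 1/\varepsilon$) are mean values of a continuous function that converge as $\varepsilon\downarrow 0$ to $-C_p q|\dot\xi(t)|^{q-2}\dot\xi(t)$ for both $\phi^\pm_\varepsilon$. Since $D\phi^+_\varepsilon(\xi(t))\in D^+u(\xi(t))$ and $D\phi^-_\varepsilon(\xi(t))\in D^-u(\xi(t))$ and these sets are closed, the common limit lies in $D^+u(\xi(t))\cap D^-u(\xi(t))$, forcing $u$ to be differentiable at $\xi(t)$ with the asserted gradient.

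For part (3), the Hamilton ODE is almost automatic once (1) and (2) are in hand. The definition $\eta(s)=D_vL(s,\xi(s),\dot\xi(s))=e^{-s}C_p q|\dot\xi|^{q-2}\dot\xi$ turns $\dot\eta(s)=e^{-s}Df(\xi(s))$ into a direct restatement of \eqref{eqn:ELp<2}; inverting the relation $\eta=e^{-s}C_p q|\dot\xi|^{q-2}\dot\xi$ is pure algebra via the Legendre duality between $|v|^q$ and $|\eta|^p$, using $1/p+1/q=1$ and $C_p=q^{-1}p^{-q/p}$ to obtain $p(C_p q)^{p-1}=1$ and therefore $\dot\xi=e^{s/(q-1)}p|\eta|^{p-2}\eta$. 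The boundary data $\xi(0)=x$ and $\eta(t)=-e^{-t}Du(\xi(t))$ follow from the definition of a minimizing curve and part (2), respectively. The main obstacle I anticipate is in part (2): for $q>2$ the map $v\mapsto |v|^{q-2}v$ is not smooth at $v=0$, so one cannot differentiate $|\dot\xi|^{q-2}\dot\xi$ directly---the $\mathrm{C}^1$ regularity of $\xi$ from Lemma \ref{lem:c1} is precisely what makes the terminal $1/\varepsilon$ mean value converge cleanly, and the same input allows the du Bois--Reymond route in part (1) to succeed without ever differentiating $|\dot\xi|^{q-2}\dot\xi$ in $s$.
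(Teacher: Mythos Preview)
Your argument is correct; parts (1) and (3) match the paper almost verbatim. The only genuine difference is in part (2). The paper fixes a single positive width $b$ (with $t+b<T_{x,\xi}$), builds the same affine-shift test functions $\phi$ and $\psi$ as you do, and then computes their gradients at $\xi(t)$ \emph{exactly}: it substitutes the Euler--Lagrange identity $e^{-s}Df(\xi(s))=\bigl(e^{-s}C_pq|\dot\xi(s)|^{q-2}\dot\xi(s)\bigr)'$ from part (1) into the gradient formula and integrates by parts, so that for every $b$ one gets $D\phi(\xi(t))=D\psi(\xi(t))=-C_pq|\dot\xi(t)|^{q-2}\dot\xi(t)$ on the nose, with no limit. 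Your route instead takes $\varepsilon\to 0$ and reads off the limit of the $1/\varepsilon$ mean value using only the $\mathrm{C}^1$ regularity of $\xi$ from Lemma \ref{lem:c1}, together with closedness of $D^\pm u(\xi(t))$. Both are valid; the paper's approach is slightly cleaner because it avoids the limiting step entirely, while yours has the minor structural advantage of not invoking part (1) inside the proof of part (2).
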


\begin{proof}
\begin{itemize}
    \item[(1)] The optimal control formula for the solution $u$ is
\begin{equation}\label{eqn:opf}
    u(x) = \inf \left\lbrace \int_0^\infty e^{-s}\left(C_p|\dot{\gamma}(s)|^{q} +f(\gamma(s))\right)ds: \gamma\in \mathrm{AC}([0,\infty);\overline{\Omega}), \gamma(0) = x\right\rbrace,
\end{equation}
where 
\begin{equation*}
    C_p = q^{-1} p^{-\frac{q}{p}} \qquad\text{and}\qquad \frac{1}{p} + \frac{1}{q} = 1.
\end{equation*}

Fix $b >0$ such that $b<T_{x, \xi}$, and define $$\displaystyle I_b[\gamma] := \int_0^b e^{-s} \Big( C_p|\dot{\gamma}(s)|^{q} +f(\gamma(s)) \Big)ds.$$
Since $\xi$ is a minimizer of \eqref{eqn:opf}, $\xi$ is also a minimizer of the problem
\begin{equation}
    \min \left\{ I_b [\gamma] ; \gamma \in \mathrm{AC}\left([0, b]; \Omega \right) , \gamma(0) = x, \gamma(b) =\xi(b) \right\}.
\end{equation}
Let $\zeta \in \mathrm{C}^\infty\left([0, b]; \mathbb{R}^n \right)$ with $ \zeta(0) = 0, \zeta(b) =0$, and $\tau \in \mathbb{R}$. Hence, $\xi(s) +\tau \zeta(s) \in \Omega$, $ \forall s \in [0,b]$, for $|\tau|$ small enough. By calculus of variation, we know
\begin{equation*}
\begin{aligned}
    0&=\left.\frac{d}{d\tau}\left( I[\xi +\tau \zeta]\right) \right|_{\tau=0}\\
    &=\int_0^b e^{-s} \left( C_p q \left| \dot{\xi}(s)\right|^{q-2} \dot{\xi} (s) \cdot \dot{\zeta}(s) +Df\left( \xi (s)\right) \cdot \zeta (s) \right) ds,\\
\end{aligned}
\end{equation*}
which is true for any $\zeta \in \mathrm{C}^\infty\left([0, b]; \mathbb{R}^n \right)$ with $ \zeta(0) = 0, \zeta(b) =0$.

Since $\xi \in \mathrm{C}^1[0,b]$ and $f \in \mathrm{C}^1\left(\overline{\Omega}\right)$, for any $t \in [0, b]$,
\begin{equation}
    e^{-t} C_p q \left| \dot{\xi}(t)\right|^{q-2} \dot{\xi}(t) = \int_0^te^{-s} Df\left(\xi(s)\right)ds+\tilde{c}
\end{equation}
for some constant $\tilde{c}$, and hence $\left| \dot{\xi}(t)\right|^{q-2} \dot{\xi}(t) \in \mathrm{C}^1[0, b]$. Hence,
\begin{equation}
     \frac{d}{dt}\left(e^{-t}C_p q \left| \dot{\xi}(t)\right|^{q-2} \dot{\xi}(t) \right)= e^{-t} Df\left(\xi(t)\right).
\end{equation}



\item[(2)] Let $x_0 \in \Omega$ and $\xi(\cdot)$ be a minimizing curve for $u(x_0)$. Fix $t \in \left(0, T_{x, \xi}\right)$ and let $y:=\xi(t)$. Then, there exists a constant $b>0$ such that $t+b<T_{x, \xi}$. Since $\left\{\xi(s): s\in [t, t+b]\right\}$ is a compact set, there exists a constant $r>0$ such that $B(\xi(s),r) \subset \Omega$ for any $s\in [t, t+b]$. Therefore, for $x \in \Omega$ close enough to $y$, the curve defined by
\begin{equation*}
    \gamma(s):=
    \xi(s+t)+\frac{b-s}{b}(x-y), \text{ for } s \in [0, b]
\end{equation*}
stays in $\Omega$. Note that $\gamma(0)=x$ and $\gamma(b)=\xi(t+b)$. By the dynamic programming principle, we know
\begin{equation*}
    u(x)= \inf\left\{ \int_0^{b} e^{-s} \left(C_p\left|\dot{\gamma}(s)\right|^q + f\left(\gamma(s)\right) \right) ds + e^{-b} u(\gamma(b)); \gamma \in \mathrm{AC}([0, b]; \overline{\Omega}), \gamma(0)=x \right\}.
\end{equation*}
Hence,
\begin{equation*}
\begin{aligned}
      u(x) &\leq \int_0^{b} e^{-s} \left(C_p\left|\dot{\xi}(s+t)-\frac{x-y}{b}\right|^q + f\left(\xi(s+t)+\frac{b-s}{b}(x-y)\right) \right) ds + e^{-b} u(\xi(b+t))\\
      &=\int_t^{t+b}e^{-s+t}\left(C_p \left|\dot{\xi}(s)-\frac{x-y}{b}\right|^q+f\left(\xi(s)+\frac{b+t-s}{b}(x-y)\right)\right)ds+e^{-b} u(\xi(b+t)),
\end{aligned}
\end{equation*}
which implies
\begin{equation}\label{ieq:eu}
\begin{aligned}
 e^{-t}u(x) \leq & \int_t^{t+b}e^{-s}\left(C_p \left|\dot{\xi}(s)-\frac{x-y}{b}\right|^q+f\left(\xi(s)+\frac{b+t-s}{b}(x-y)\right)\right)ds\\& +e^{-(b+t)} u(\xi(b+t)).
\end{aligned}
\end{equation}
It is also known from the optimal control formula for $u(y)$ that
\begin{equation}\label{eqn:eu}
    e^{-t}u(y) =e^{-t}u\left(\xi(t)\right)= \int_t^{t+b}e^{-s}\left(C_p \left|\dot{\xi}(s)\right|^q+f\left(\xi(s)\right)\right)ds+e^{-(b+t)} u(\xi(b+t)).
\end{equation}
Define
\begin{equation*}
    \phi(x) := \int_t^{t+b}e^{-s}\left(C_p \left|\dot{\xi}(s)-\frac{x-y}{b}\right|^q+f\left(\xi(s)+\frac{b+t-s}{b}(x-y)\right)\right)ds+e^{-(b+t)} u(\xi(b+t)).
\end{equation*}
From \eqref{ieq:eu} and \eqref{eqn:eu}, we have $\phi(x) \geq e^{-t}u(x)$ for any $x$ near $y:=\xi(t)$ and $\phi(y)=e^{-t}u(y)$. Now, compute
\begin{equation*}
\begin{aligned}
      D\phi(y) =&\int_t^{t+b} -\frac{1}{b} e^{-s}  C_p q\left|\dot{\xi}(s)\right|^{q-2} \dot{\xi}(s) +e^{-s}Df\left(\xi(s)\right)\frac{b+t-s}{b}ds\\
      =&\int_t^{t+b} -\frac{1}{b} e^{-s}  C_p q\left|\dot{\xi}(s)\right|^{q-2} \dot{\xi}(s) + \left(e^{-s}C_pq\left| \dot{\xi}(s)\right|^{q-2} \dot{\xi}(s)\right)^\prime\frac{b+t-s}{b}ds\\
      =&\int_t^{t+b} \frac{d}{ds} \left( e^{-s}C_pq\left| \dot{\xi}(s)\right|^{q-2} \dot{\xi}(s) \frac{b+t-s}{b}\right)ds\\
      =& -e^{-t}C_pq\left| \dot{\xi}(t)\right|^{q-2} \dot{\xi}(t) \in e^{-t} D^+u(y).
\end{aligned}
\end{equation*}
Therefore, $-C_pq\left| \dot{\xi}(t)\right|^{q-2} \dot{\xi}(t) \in  D^+u(y)$.
On the other hand, consider
\begin{equation*}
    \gamma(s):=\begin{aligned}
      \xi(s)+\frac{s}{t}(x-y)
    \end{aligned}, \text{ for } s \in [0, t].
\end{equation*}
Again, if $x$ is close enough to $y$, then $\gamma(s) \in \Omega$, $\forall \, s \in [0, t]$. Note that $\gamma(0) = \xi(0)$ and $\gamma(t)=x$. We know
\begin{equation}\label{eqn:eu0}
\begin{aligned}
    u\left(\xi(0)\right) &=  \int_0^{t} e^{-s} \left(C_p\left|\dot{\xi}(s)\right|^q + f\left(\xi(s)\right) \right) ds + e^{-t} u(\xi(t))\\
    &=\int_0^{t} e^{-s} \left(C_p\left|\dot{\xi}(s)\right|^q + f\left(\xi(s)\right) \right) ds +e^{-t} u(y)
\end{aligned}
\end{equation}
 and
 \begin{equation}\label{ieq:eu0}
   u\left(\xi(0)\right) \leq \int_0^{t} e^{-s} C_p\left|\dot{\xi}(s) + \frac{x-y}{t}\right|^q +e^{-s} f\left(\xi(s)+\frac{s}{t}(x-y)\right)ds+e^{-t}u\left(x\right).
 \end{equation}
Define
\begin{equation*}
    \psi(x):=  u\left(\xi(0)\right) -\int_0^{t} e^{-s} \left( C_p\left|\dot{\xi}(s) + \frac{x-y}{t}\right|^q + f\left(\xi(s)+\frac{s}{t}(x-y)\right)\right)ds.
\end{equation*}
Then, from \eqref{eqn:eu0} and \eqref{ieq:eu0}, $\psi(x) \leq e^{-t}u(x)$ for any $x$ near $y:=\xi(t)$ and $\psi(y)=e^{-t}u(y)$. Compute
\begin{equation*}
    \begin{aligned}
      D\psi(y)&=\int_0^t -\frac{1}{t}e^{-s}C_pq\left|\dot{\xi}(s)\right|^{q-2}\dot{\xi}(s)-e^{-s} Df \left(\xi(s) \right) \frac{s}{t}ds\\
      &=\int_0^t -\frac{1}{t}e^{-s}C_pq\left|\dot{\xi}(s)\right|^{q-2}\dot{\xi}(s)-\left(e^{-s}C_pq\left| \dot{\xi}(s)\right|^{q-2} \dot{\xi}(s)\right)^\prime \frac{s}{t}ds\\
      &=-\int_0^t \frac{d}{ds}\left(  e^{-s} C_pq\left| \dot{\xi}(s)\right|^{q-2} \dot{\xi}(s) \frac{s}{t}\right)ds\\
      &=-e^{-t}C_pq\left| \dot{\xi}(t)\right|^{q-2} \dot{\xi}(t) \in e^{-t}D^-u(y).
    \end{aligned}
\end{equation*}
Hence, $-C_pq\left| \dot{\xi}(t)\right|^{q-2} \dot{\xi}(t) \in  D^-u(y)$. Since  $-C_pq\left| \dot{\xi}(t)\right|^{q-2} \dot{\xi}(t) \in  D^+u(y)$ as well, $u$ is differentiable at $y:=\xi(t)$ and 
\begin{equation*}
    Du(y)=Du(\xi(t))= -C_pq\left| \dot{\xi}(t)\right|^{q-2} \dot{\xi}(t).
\end{equation*}
See similar results in \cite{evans2001effective, Fathi2014, tran_hamilton-jacobi_2021}.

\item[(3)] Let $L(s,x,v):= e^{-s}\left( C_p |v|^q +f(x) \right)$. Compute the Legendre transform of $L$ and get
\begin{equation*}
    H(s,x,\beta)= \sup_v \left( \beta \cdot v - L(s,x,v) \right)=e^{\frac{s}{q-1}}|\beta|^p-e^{-s}f(x).
\end{equation*} Define $\eta(s):= D_vL\left(s, 
\xi(s), \dot{\xi}(s)\right)=e^{-s}C_p q\left|\dot{\xi}(s)\right|^{q-2} \dot{\xi}(s)$. From part $(1)$, we know $\eta(s)$ is continuously differentiable for $s \in \left[0, T_{x, \xi}\right)$. 
Since $v \mapsto L(s, x, v)$ is strictly convex, the pair $(\xi, \eta)$ solves Hamilton's ODE
\begin{equation*}
    \left\{\begin{aligned}
     \dot{\xi}(s)&= D_pH(s, \xi(s), \eta(s))=e^\frac{s}{q-1} p \left| \eta (s)\right|^{p-2} \eta(s)\\
     \dot{\eta}(s) &=-D_xH(s, \xi(s), \eta(s))=e^{-s}Df\left(\xi(s)\right).
    \end{aligned}
    \right.
\end{equation*}
Also from part $(2)$, we see
\begin{equation*}
    \eta(t)= e^{-t}C_p q\left|\dot{\xi}(t)\right|^{q-2} \dot{\xi}(t) =-e^{-t}Du\left(\xi(t)\right)
\end{equation*}
for any $t \in \left(0, T_{x, \xi}\right)$.
\end{itemize}
\end{proof}

A direct result of the above proposition is the regularity of minimizing curves, especially for the case $1<p<2$. If the velocity of a minimizing curve is nonzero at some time point, then $\xi$ is twice continuously differentiable there.
\begin{corollary}\label{cor:xic2}
Under the same assumptions in Proposition \ref{prp:ELderiH}, let $x \in \Omega$, $\xi \in \mathrm{AC}([0,+\infty); \overline{\Omega})$ be a minimizing curve for $u(x)$. Then, $\xi(\cdot)$ is twice continuously differentiable at any time $t \in \left[0, T_{x, \xi}\right)$ where $\dot{\xi}(t) \neq 0$. In particular, if $\dot{\xi}(t) \neq 0$ for all $t\in (0, T_{x, \xi})$, then $\xi(\cdot) \in \mathrm{C}^2 \left(0, T_{x, \xi}\right)$. 
\end{corollary}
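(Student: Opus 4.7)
The plan is to read off the regularity of $\dot\xi$ directly from the Hamiltonian side of the Euler-Lagrange system. By part (1) of Proposition \ref{prp:ELderiH}, the dual momentum
\[
\eta(s) \;=\; e^{-s} C_p q \,|\dot\xi(s)|^{q-2}\dot\xi(s)
\]
is a $C^1$ function on $[0,T_{x,\xi})$, and by part (3), $\dot\xi$ is recovered from $\eta$ by the explicit formula
\[
\dot\xi(s) \;=\; e^{s/(q-1)}\, p\, |\eta(s)|^{p-2}\eta(s).
\]
Thus it suffices to show that the map $z \mapsto |z|^{p-2}z$ is smooth near $\eta(t)$ whenever $\dot\xi(t)\neq 0$, so that the composition on the right-hand side is $C^1$ at $t$.

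First I would verify that $\dot\xi(t)\neq 0$ if and only if $\eta(t)\neq 0$; this is immediate from the defining relation of $\eta$ together with the fact that $|\eta(s)| = e^{-s}C_p q\,|\dot\xi(s)|^{q-1}$ and $q-1>0$. Next, for $p\in(1,2]$ the exponent $p-2$ lies in $(-1,0]$, so on $\mathbb{R}^n\setminus\{0\}$ the map $z\mapsto |z|^{p-2}z$ is $C^\infty$ (in fact real-analytic). Consequently, at any $t\in[0,T_{x,\xi})$ with $\dot\xi(t)\neq 0$, there is a neighborhood of $t$ on which $\eta$ stays away from $0$ (by continuity of $\eta$), and on this neighborhood
\[
\dot\xi(s) \;=\; e^{s/(q-1)}\, p\, |\eta(s)|^{p-2}\eta(s)
\]
is the composition of $C^1$ and $C^\infty$ functions, hence $C^1$. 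Therefore $\xi$ is twice continuously differentiable at $t$, which proves the first assertion.

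The second assertion follows by applying the first at every $t\in(0,T_{x,\xi})$ under the additional hypothesis $\dot\xi\not\equiv 0$ pointwise on this interval: the set of such $t$ covers all of $(0,T_{x,\xi})$, so $\xi\in C^2(0,T_{x,\xi})$. The only delicate point in the argument is the loss of smoothness of $z\mapsto |z|^{p-2}z$ at the origin when $1<p<2$, which is precisely the reason the hypothesis $\dot\xi(t)\neq 0$ cannot be dropped; but this singularity is avoided by working in a neighborhood where $\eta$ is bounded away from zero, so there is no real obstacle. In the case $p=2$ (where $q=2$ as well), the map $z\mapsto z$ is trivially smooth everywhere, recovering the known fact that minimizing curves are globally $C^2$, as noted in the remark before the proposition.
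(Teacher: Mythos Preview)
Your proof is correct and follows essentially the same approach as the paper: both arguments exploit the fact that the momentum $\eta(s)=e^{-s}C_pq|\dot\xi(s)|^{q-2}\dot\xi(s)$ is $C^1$ on $[0,T_{x,\xi})$ (part (1) of Proposition \ref{prp:ELderiH}) and then recover $\dot\xi$ by inverting the map $v\mapsto |v|^{q-2}v$, which is smooth away from the origin. The paper does this in two steps (first extracting $|\dot\xi|$ from the integral form of the Euler--Lagrange equation, then dividing), whereas you invoke the Hamiltonian formula $\dot\xi=e^{s/(q-1)}p|\eta|^{p-2}\eta$ in one stroke; the content is the same.
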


\begin{proof}
Observe from \eqref{eqn:ELp<2} that
\begin{equation}
\begin{aligned}
      \left| \dot{\xi}(t)\right|^{q-1} &= \frac{e^t}{C_pq} \left|\int_0^te^{-s} Df\left(\xi(s)\right)ds+\tilde{c} \right|\\
      \left| \dot{\xi}(t)\right|&=\left(\frac{e^t}{C_pq}\right)^\frac{1}{q-1}  \left|\int_0^te^{-s} Df\left(\xi(s)\right)ds+\tilde{c} \right|^\frac{1}{q-1},
\end{aligned}
\end{equation}
where $\tilde{c}$ is some constant. Hence, $\left| \dot{\xi}(\cdot)\right|$ is continuously differentiable at any time point $t$ where $\dot{\xi}(t)\neq 0$. Moreover, if $\dot{\xi}(t) \neq 0$,

\begin{equation}
    \dot{\xi}(t)=\frac{e^t}{C_pq\left|\dot{\xi}(t)\right|^{q-2}} \left(\int_0^te^{-s} Df\left(\xi(s)\right)ds+\tilde{c}\right),
\end{equation}
again from \eqref{eqn:ELp<2}. Therefore, $\xi(\cdot)$ is twice continuously differentiable at any time point t where $\dot{\xi}(t) \neq 0$.
\end{proof}

As we can see from Proposition \ref{prp:localsemi}, in order to prove the global semiconcavity of $u$ in the whole domain, it is essential to obtain a lower bound of the time minimizing curves take to hit the boundary of the domain. First, we show that there exists a minimizing curve $\xi$ for every $x \in \Omega$ such that $T_{x, \xi}= +\infty$. Consequently, $u$ is globally semiconcave by Proposition \ref{prp:localsemi}. Moreover, we prove that for any minimizing curve $\xi$ emanating from any $x \in \Omega$, $T_{x, \xi} = +\infty$. 

\begin{proposition}\label{prp:lbtime}
Let $\Omega \subset \mathbb{R}^n$ be an open, bounded, connected domain with $C^2$ boundary. Suppose $H(x, \beta) := |\beta|^p -f(x)$ for some $p \in (1, 2]$ and $f \in \mathrm{C}^1(\overline{\Omega})$ which is semiconcave in $\Omega$. Assume 
\begin{itemize}
    \item[\rm(1)] $f(x) \equiv \min_{y\in \overline{\Omega}}f(y)$ for all $ x \in \partial \Omega$,
    \item[\rm(2)] $f(x) > \min_{y\in \overline{\Omega}}f(y)$ for all $x \in \Omega$,
    \item[\rm(3)] there exists a constant $C>0$ such that $|Df(x)| \leq C \left(f(x)-\min_{x\in \overline{\Omega}}f(x)\right)^\frac{1}{p}$ for all $x \in \Omega$. 
\end{itemize} 
Let $x \in \Omega$. Then, there exists a minimizing curve $\xi(\cdot)$ for $x$ such that $T_{x, \xi} = +\infty$, that is, there exists a minimizing curve $\xi(\cdot)$ for every $x \in \Omega$ that does not escape the domain $\Omega$ in finite time.
\end{proposition}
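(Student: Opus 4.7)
The plan is to prove the stronger universal statement (Proposition~\ref{prp:Tinfty}): for every minimizing curve $\xi$ emanating from $x \in \Omega$, one has $T_{x,\xi} = +\infty$. Since a minimizer exists for every $x$, this immediately yields Proposition~\ref{prp:lbtime}. After subtracting $\min_{\overline{\Omega}} f$ from both $f$ and $u$ (a harmless shift that preserves every hypothesis together with the optimal control formula), I may assume $\min_{\overline{\Omega}} f = 0$; then $f \geq 0$ in $\overline{\Omega}$ with $f \equiv 0$ on $\partial \Omega$, and Lemma~\ref{lem:supsub} combined with the comparison principle gives $0 \leq c_0 f \leq u \leq f$ on $\overline{\Omega}$.

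The first step is to transfer the Hamilton--Jacobi equation onto the curve. Fix a minimizer $\xi$. On $[0, T_{x,\xi})$, $\xi$ stays in $\Omega$ and is $C^1$ by Lemma~\ref{lem:c1}, and by Proposition~\ref{prp:ELderiH}(2), $u$ is differentiable at each $\xi(t)$ with $Du(\xi(t)) = -C_p q |\dot\xi(t)|^{q-2}\dot\xi(t)$. Because $u$ is simultaneously a viscosity sub- and supersolution in $\Omega$ and is differentiable at the interior point $\xi(t)$, the PDE holds classically along the curve:
\[
u(\xi(t)) + |Du(\xi(t))|^p = f(\xi(t)).
\]
A short computation using $C_p q = p^{-q/p}$ and $p(q-1) = q$ gives $|Du(\xi(t))|^p = p^{-q} |\dot\xi(t)|^q$, so combined with $u \geq 0$ I extract the key pointwise speed bound $|\dot\xi(t)| \leq p\, f(\xi(t))^{1/q}$.

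The second step is a Gronwall estimate on $g(t) := f(\xi(t))$. Using the structural hypothesis $|Df| \leq C f^{1/p}$ together with the speed bound,
\[
|\dot g(t)| \leq |Df(\xi(t))|\,|\dot\xi(t)| \leq C f(\xi(t))^{1/p} \cdot p\, f(\xi(t))^{1/q} = Cp\, g(t),
\]
so $g$ satisfies $\dot g \geq -Cp\, g$ on $[0, T_{x,\xi})$, which yields $f(\xi(t)) \geq f(x)\, e^{-Cpt}$. If $T_{x,\xi}$ were finite, then $\xi(T_{x,\xi}) \in \partial\Omega$ forces $f(\xi(T_{x,\xi})) = 0$ by continuity of $f\circ\xi$, contradicting $f(\xi(T_{x,\xi})) \geq f(x) e^{-Cp T_{x,\xi}} > 0$.

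The main obstacle I foresee is the legitimacy of the first step: one must be confident that at any interior point where $u$ is differentiable, the viscosity sub- and supersolution inequalities combine into a classical equality for the Hamilton--Jacobi equation. Proposition~\ref{prp:ELderiH}(2) already guarantees differentiability of $u$ along the minimizer, and this pointwise upgrade then turns the PDE into the quantitative identity that powers both the speed bound and the Gronwall step. The normalization $\min f = 0$ and the subsolution inequality $c_0 f \leq u$ (ensuring $u \geq 0$) are the remaining ingredients that make the speed bound $|\dot\xi| \leq p\, f^{1/q}$, and hence the entire argument, available.
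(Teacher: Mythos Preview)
Your proof is correct and takes a genuinely different, more direct route than the paper. The paper first constructs, for each $x$, a particular minimizer that is almost everywhere $C^2$, integrates the Euler--Lagrange equation by parts to obtain an integral identity relating $u(\xi(t_1))$ to $\int_{t_1}^{T_{x,\xi}} p\,(f-u)\circ\xi\,ds$, and then combines the two-sided comparison $c_0 f \leq u \leq f$ with a restart argument to force $T_{x,\xi}=+\infty$. Your argument instead reads off from the pointwise equation $u(\xi)+|Du(\xi)|^p=f(\xi)$ and the trivial bound $u\geq 0$ the speed estimate $|\dot\xi|\leq p\,f(\xi)^{1/q}$, after which the structural hypothesis $|Df|\leq C f^{1/p}$ feeds directly into a Gronwall inequality for $g=f\circ\xi$, giving $f(\xi(t))\geq f(x)e^{-Cpt}>0$. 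This is more elementary: it sidesteps the $C^2$ regularity discussion, the integration by parts, and the restart trick, and it proves the stronger Proposition~\ref{prp:Tinfty} in one stroke (whence Proposition~\ref{prp:lbtime} is immediate). The paper's approach, by contrast, showcases the Euler--Lagrange/weak-KAM machinery emphasized in the introduction and uses assumption~(3) only indirectly, through the existence of the subsolution $c_0 f$ in Lemma~\ref{lem:supsub}; your approach exploits assumption~(3) pointwise and makes its role in controlling the dynamics completely transparent.
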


\begin{remark}
We emphasize the difference between Proposition \ref{prp:lbtime} and Proposition \ref{prp:Tinfty} here. Proposition \ref{prp:lbtime} only states that for every $x \in \Omega$, there exists one minimizing curve $\xi$ such that $T_{x, \xi}=+\infty$, while Proposition \ref{prp:Tinfty} claims that for any minimizing curves emanating from $x \in \Omega$, the time to hit the boundary is $+\infty$. 
\end{remark}

\begin{proof}[Proof of Proposition \ref{prp:lbtime}]
Without loss of generality, we can assume $f=\min_{x\in \overline{\Omega}}f(x)\equiv0$ on $\partial \Omega$. From Lemma \ref{lem:f=0}, we have that $u=f=0$ on $\partial \Omega$.

We first show that for any $x\in \Omega$, there exists a minimizing curve $\xi$ for $u(x)$ such that $\xi$ is almost everywhere twice continuously differentiable in $[0, T_{x, \xi})$. Let $\xi$ be a minimizing curve for $u(x)$. If $\dot{\xi}(t) \neq 0$ for all $t \in (0, T_{x, \xi})$, then $\xi \in \mathrm{C}^2(0, T_{x, \xi})$ from Corollary \ref{cor:xic2}. Suppose $\dot{\xi}(t)=0$ for some $t \in (0, T_{x, \xi})$. Consider $t_0:=\inf\left\{t \geq 0: \dot{\xi}(t)=0\right\}$. Since $\xi\in \mathrm{C}^1\left[0,T_{x, \xi}\right)$, $\dot{\xi}(t_0)=0$. 

If $t_0=0$, consider a new curve $\tilde{\xi}(s)\equiv \xi(0)=x$, for all $s\in[0, +\infty)$. We claim that $\tilde{\xi}$ is another minimizing curve for $u(x)$. Indeed, compute the cost for $\tilde{\xi}$ and get
\begin{equation}\label{eqn:newcost}
    \int_0^{\infty}e^{-s}\left(C_p\left|\dot{\tilde{\xi}}(s)\right|^q+f\left(\tilde{\xi}(s)\right)\right)ds=  \int_0^{\infty}e^{-s} f(\xi(0)) ds=f(\xi(0)).
\end{equation}
From Proposition \ref{prp:ELderiH}, we know $u$ is differentiable at $\xi(s)$ for $s \in \left(0, T_{x, \xi}\right)$ and 
\begin{equation*}
    Du\left( \xi(s) \right)=-C_pq\left|\dot{\xi}(s)\right|^{q-2} \dot{\xi}(s).
\end{equation*}
Hence, $u$ solves \eqref{eqn:scpeq} classically at $\xi(s)$ for $s\in(0,T_{x, \xi})$ and
\begin{equation}\label{eqn:xidotdf-u}
C_pq \left|\dot{\xi}(s)\right|^{q-1}=\left|Du(\xi(s))\right|=(f(\xi(s))-u(\xi(s)))^\frac{1}{p}.
\end{equation}
Now, let $s \to 0$ and obtain
\begin{equation}
    C_pq\left|\dot{\xi}(0)\right|^{q-1}=(f(\xi(0))-u(\xi(0)))^\frac{1}{p}
\end{equation}
since $\xi \in \mathrm{C}^1\left[0, T_{x, \xi}\right)$ and $f, u$ are continuous. Hence, $\dot{\xi}(0) =0$ if and only if $f\left(\xi(0)\right)=u\left(\xi(0)\right)$.
Now, from \eqref{eqn:newcost}, the cost of $\tilde{\xi}$ is
\begin{equation}
     \int_0^{\infty}e^{-s}\left(C_p\left|\dot{\tilde{\xi}}(s)\right|^q+f\left(\tilde{\xi}(s)\right)\right)ds=f(\xi(0))=u(\xi(0))=u(x).
\end{equation}
Therefore, $\tilde{\xi} \equiv \xi(0) \in \mathrm{C}^2[0, +\infty)$ is a minimizing curve for $u(x)$.

If $t_0 >0$, consider a new curve
\begin{equation}
    \tilde{\xi}(s)=\left\{\begin{aligned}
      &\xi(s) \quad \text{ for } 0 \leq s 
      < t_0\\
      &\xi(t_0) \quad \text{ for } s \geq t_0
    \end{aligned}
    \right.
\end{equation}
Note that $\dot{\xi}(s)>0$ for $s \in [0, t_0)$. Hence, $\tilde{\xi}$ is twice continuously differentiable on $[0, t_0)$ by Corollary \ref{cor:xic2}. Apparently, $\tilde{\xi}$ is twice continuously differentiable on $(t_0, +\infty)$. Hence, $\tilde{\xi}$ is twice continuously differentiable except at $t_0$. Now we verify $\tilde{\xi}$ is a minimizing curve. By a similar argument from the previous case, we know 
\begin{equation*}
    f\left(\xi(t_0)\right)=u\left(\xi(t_0)\right)
\end{equation*}
since $\dot{\xi}(t_0)=0$. We can compute the cost of $\tilde{\xi}$ and get
\begin{equation}
\begin{aligned}
      \int_0^\infty e^{-s}\left(C_p\left|\dot{\tilde{\xi}}(s)\right|^q+f\left(\tilde{\xi}(s)\right)\right)ds &=\int_0^{t_0}e^{-s}\left(C_p\left|\dot{\xi}(s)\right|^q+f\left(\xi(s)\right)\right)ds+e^{-t_0}f\left(\xi(t_0)\right)\\
      &=\int_0^{t_0}e^{-s}\left(C_p\left|\dot{\xi}(s)\right|^q+f\left(\xi(s)\right)\right)ds+e^{-t_0}u\left(\xi(t_0)\right)\\
      &=u(x)
\end{aligned}
\end{equation}
where in the last equality, we use the dynamic programming principle and the fact that $\xi$ is a minimizing curve for $u(x)$. Therefore, $\tilde{\xi}$ is indeed a minimizing curve for $u(x)$.

We have proved that for any $x\in \Omega$, there exists a minimizing curve $\xi$ for $u(x)$ such that $\xi$ is almost everywhere twice continuously differentiable in $[0, T_{x, \xi})$. Next, we show for this minimizing curve, the time to escape the domain is $+\infty$.

If $T_{x, \xi}=+\infty$, we are done. Suppose $T_{x, \xi} < \infty$. From \eqref{eqn:xidotdf-u}, we see that
\begin{equation*}
C_pq \left|\dot{\xi}(s)\right|^{q-1}=\left|Du(\xi(s))\right|=(f(\xi(s))-u(\xi(s)))^\frac{1}{p} \to 0
\end{equation*}
as $s\to T_{x, \xi}$ since $f=u=0$ on $\partial \Omega$. This implies that as $s \to T_{x, \xi}$,
\begin{equation}
    \left|\dot{\xi}(s)\right|\to 0.
\end{equation}

Since $f>0$ in $\Omega$ and $f=0$ on $\partial \Omega$, there exists some constant $t_1 \in [0, T_{x, \xi})$ such that $$f\left(\xi(t_1)\right)=\max_{s\in \left[0, T_{x, \xi}\right)} f(\xi(s)) > 0.$$ From Proposition \ref{prp:ELderiH}, we know that the Euler-Lagrange equation is
\begin{equation}\label{eqn:ELnibp}
\begin{aligned}
  &-\left(e^{-s}C_pq\left| \dot{\xi}(s)\right|^{q-2} \dot{\xi}(s)\right)^\prime +e^{-s} Df\left( \xi (s)\right) =0\\
  \Longrightarrow & -\left(e^{-s}C_pq\left| \dot{\xi}(s)\right|^{q-2} \dot{\xi}(s)\right)^\prime \cdot \dot{\xi}(s) e^s+ Df\left( \xi (s)\right)\cdot \dot{\xi}(s) =0.
\end{aligned}
\end{equation}
Let $T \in (t_1, T_{x, \xi})$ and integrate both sides of \eqref{eqn:ELnibp} from $t_1$ to $T$ to get
\begin{equation}\label{eqn:ELibp}
\begin{aligned}
 &\int_{t_1}^T e^{-s}C_pq\left| \dot{\xi}(s)\right|^{q-2} \dot{\xi}(s)\left( \dot{\xi}(s) e^s\right)^\prime ds -C_pq\left| \dot{\xi}(s)\right|^q\bigg\rvert^T_{t_1} + \int_{t_1}^TDf\left( \xi (s)\right)\cdot \dot{\xi}(s) =0\\
\Longrightarrow &\int_{t_1}^T e^{-s}C_pq\left| \dot{\xi}(s)\right|^{q-2} \dot{\xi}(s)\left( \ddot{\xi}(s) e^s+\dot{\xi}(x)e^s\right) ds+C_pq\left| \dot{\xi}(t_1)\right|^q-C_pq\left| \dot{\xi}(T)\right|^q+ f(\xi(T))-f(\xi(t_1))=0.
\end{aligned}
\end{equation}
Let $T \to T_{x, \xi}$ in \eqref{eqn:ELibp} to obtain
\begin{equation}\label{eqn:ELibpa}
\begin{aligned}
 \int_{t_1}^{T_{x, \xi}}C_pq\left| \dot{\xi}(s)\right|^{q-2} \dot{\xi}(s)\ddot{\xi}(s)ds+\int_{t_1}^{T_{x, \xi}}C_pq\left| \dot{\xi}(s)\right|^qds +C_pq\left| \dot{\xi}(t_1)\right|^q&=f(\xi(t_1)) \\
  C_p\int_{t_1}^{T_{x, \xi}}\frac{d}{ds}\left(\left| \dot{\xi}(s)\right|^q\right)ds +\int_{t_1}^{T_{x, \xi}}C_pq\left| \dot{\xi}(s)\right|^qds +C_pq\left| \dot{\xi}(t_1)\right|^q&=f(\xi(t_1))\\
  \int_{t_1}^{T_{x, \xi}}C_pq\left| \dot{\xi}(s)\right|^qds +C_p(q-1)\left| \dot{\xi}(t_1)\right|^q&=f(\xi(t_1)).
\end{aligned}
\end{equation}
From \eqref{eqn:uderi}, compute
\begin{equation}\label{eqn:xidot}
     \left|\dot{\xi}(s)\right|=\left(\frac{1}{C_pq}\right)^\frac{1}{q-1}\left|Du(\xi(s))\right|^\frac{1}{q-1}=\left(\frac{1}{q^{-1}p^{-\frac{q}{p}}q}\right)^\frac{1}{q-1}\left|Du(\xi(s))\right|^\frac{1}{q-1}=p\left|Du(\xi(s))\right|^\frac{1}{q-1}.
\end{equation}
Plug \eqref{eqn:xidot} into \eqref{eqn:ELibpa} and get
\begin{equation*}
     \int_{t_1}^{T_{x, \xi}}C_p q p^q\left| Du \left(\xi(s)\right)\right|^p ds +C_p(q-1)p^q\left|Du \left(\xi(t_1)\right)\right|^p=f(\xi(t_1)).
\end{equation*}
Since $C_pqp^q=p$ and $C_p\left(q-1\right)p^q=1$, we obtain
\begin{equation}\label{eqn:fuu}
   \begin{aligned}
    \int_{t_1}^{T_{x, \xi}}p\left| Du \left(\xi(s)\right)\right|^p ds +\left|Du \left(\xi(t_1)\right)\right|^p&=f(\xi(t_1))\\
    \int_{t_1}^{T_{x, \xi}}p\left( f\left(\xi(s)\right)-u\left(\xi(s)\right)\right) ds +\left( f\left(\xi(t_1)\right)-u\left(\xi(t_1)\right)\right)&=f(\xi(t_1))\\
    \int_{t_1}^{T_{x, \xi}}p\left( f\left(\xi(s)\right)-u\left(\xi(s)\right)\right) ds &= u\left(\xi(t_1)\right),
\end{aligned}
\end{equation}
where to get the second line, we plug in $\left|Du\left(\xi(s)\right)\right|^p=f\left(\xi(s)\right)-u\left(\xi(s)\right)$ for $s \in (0, T_{x, \xi})$.

Now, for the left hand side of the last equation in \eqref{eqn:fuu}, we deduce
\begin{equation}\label{ieq:fu}
\begin{aligned}
      \int_{t_1}^{T_{x, \xi}}p\left( f\left(\xi(s)\right)-u\left(\xi(s)\right)\right) ds &\leq \int_{t_1}^{T_{x, \xi}}p(1-c_0) f\left(\xi(s)\right) ds\\
      &\leq \left(T_{x, \xi}-t_1\right)p(1-c_0) f(\xi(t_1))
\end{aligned}
\end{equation}
for some constant $c_0 \in \left(0, \frac{1}{2}\right]$ by Lemma \ref{lem:supsub} and the comparison principle, where the second inequality follows from the fact that $f\left(\xi(t_1)\right)=\max_{0\leq s \leq T_{x, \xi}}f(\xi(s))$.

For the right hand side of the last equation in \eqref{eqn:fuu}, we have
\begin{equation}\label{ieq:u}
    u\left(\xi(t_1)\right) \geq c_0f\left(\xi(t_1)\right)
\end{equation}
for some constant $c_0 \in \left(0, \frac{1}{2}\right]$ by Lemma \ref{lem:supsub} and the comparison principle again.

Combining \eqref{eqn:fuu}, \eqref{ieq:fu} and \eqref{ieq:u}, we obtain
\begin{equation}
\begin{aligned}
      c_0f\left(\xi(t_1)\right) &\leq \left(T_{x, \xi}-t_1\right)p(1-c_0) f(\xi(t_1))\\
      \frac{c_0}{p(1-c_0)}&\leq T_{x, \xi}-t_1 \\
      \frac{c_0}{p(1-c_0)}& \leq T_{x, \xi}.
\end{aligned}
\end{equation}
Note $c_0:=\min\left\{\frac{1}{2}, 2^{-1/p}\frac{1}{C}\right\}$ comes from Lemma \ref{lem:supsub}.
Therefore, $T_{x, \xi}\geq C_0$ for some constant $C_0$ that only depends on $p, C$ where $C$ comes from the condition $|Df| \leq Cf^\frac{1}{p}$. Now consider $\gamma(s):=\xi\left(s+T_{x, \xi}-\frac{C_0}{2}\right)$ for $s \in \left[0, \frac{C_0}{2}\right]$. Then, $\gamma(\cdot)$ is a minimizing curve for $u\left(\xi\left(T_{x, \xi}-\frac{C_0}{2}\right)\right)$. By the previous argument, we know $\displaystyle T_{\xi\left(T_{x, \xi}-\frac{C_0}{2}\right), \gamma} \geq C_0$. But this contradicts the fact that $\gamma(\frac{C_0}{2}) \in \partial \Omega$. Therefore, $T_{x, \xi}=+\infty$. 
\end{proof}

In the proof above, the key point is that there exists a minimizing curve for every $x\in \Omega$ such that it is twice differentiable almost everywhere, as is used in \eqref{eqn:ELibp}. In fact, we can modify the previous proof and show that for any minimizing curve $\xi$ emanating from any $x \in \Omega$, $T_{x, \xi}= +\infty$, even though $\xi$ might not be twice differentiable almost everywhere, which finally gives us the proof of Proposition \ref{prp:Tinfty}.

\begin{proof}[Proof of Proposition \ref{prp:Tinfty}]
If $T_{x, \xi}=+\infty$, we are done. Suppose $T_{x, \xi}<\infty$. We would like to proceed by contradiction, as in the proof of Proposition \ref{prp:lbtime}. It suffices to prove 
\begin{equation}
        \int_{t_1}^{T_{x, \xi}}p\left( f\left(\xi(s)\right)-u\left(\xi(s)\right)\right) ds = u\left(\xi(t_1)\right),
\end{equation}
where $t_1$ is chosen so that $f\left(\xi(t_1)\right)=\max_{0\leq s \leq T_{x, \xi}}f(\xi(s))$ as in \eqref{eqn:fuu}. Once we have this, we can argue by contradiction, exactly like in the proof of Proposition \ref{prp:lbtime}.

Suppose $\dot{\xi}(t) \neq 0$ at some point $t \in (0, T_{x, \xi})$. Since $\dot{\xi}$ is continuous, there exits $t_0 < t$ and $h > 0$ such that $\dot{\xi}(s) \neq 0$ for $s \in [t_0, t+h]$. From Corollary \ref{cor:xic2}, we know $\xi$ is twice continuously differentiable on $[t_0, t+h]$. Integrate \eqref{eqn:ELnibp} from $t_0$ to $t$ and obtain
\begin{equation}\label{eqn:nd}
\begin{aligned}
 &\int_{t_0}^t e^{-s}C_pq\left| \dot{\xi}(s)\right|^{q-2} \dot{\xi}(s)\left( \dot{\xi}(s) e^s\right)^\prime ds -C_pq\left| \dot{\xi}(s)\right|^q\bigg\rvert^t_{t_0} + \int_{t_0}^t Df\left( \xi (s)\right)\cdot \dot{\xi}(s) =0\\
\Longrightarrow &\int_{t_0}^t e^{-s}C_pq\left| \dot{\xi}(s)\right|^{q-2} \dot{\xi}(s)\left( \ddot{\xi}(s) e^s+\dot{\xi}(x)e^s\right) ds+C_pq\left| \dot{\xi}(t_0)\right|^q-C_pq\left| \dot{\xi}(t)\right|^q+ f(\xi(t))-f(\xi(t_0))=0.\\
       \Longrightarrow &\int_{t_0}^{t}C_pq\left| \dot{\xi}(s)\right|^qds+C_p(q-1)\left| \dot{\xi}(t_0)\right|^q-C_p(q-1)\left| \dot{\xi}(t)\right|^q+ f(\xi(t))-f(\xi(t_0))=0\\
      \Longrightarrow &\int_{t_0}^{t}C_pq\left| \dot{\xi}(s)\right|^qds=C_p(q-1)p^q\left[f(\xi(t))-u(\xi(t))-\left(f(\xi(t_0))-u(\xi(t_0))\right)\right]+f(\xi(t_0))-f(\xi(t)).
\end{aligned}
\end{equation}
If we take derivative with respect to $t$ on both sides of \eqref{eqn:nd}, we get
\begin{equation}\label{eqn:nda}
  C_pq\left| \dot{\xi}(t)\right|^q=C_p(q-1)p^q\left[Df(\xi(t))-Du(\xi(t))\right]\cdot \dot{\xi}(t)-Df(\xi(t)) \cdot \dot{\xi}(t).
\end{equation}
\eqref{eqn:nda} holds true at the time points $t$ where $\dot{\xi} (t)\neq 0$. Moreover, if $\xi$ is not twice differentiable at $t$, then $\dot{\xi}(t)=0$ by Corollary \ref{cor:xic2} and \eqref{eqn:nda} still holds true. Hence, \eqref{eqn:nda} is true for any $t \in [0, T_{x, \xi})$. Therefore, for any $t, T$ such that $0\leq t <T < T_{x, \xi}$,
\begin{equation}
    \int_{t}^{T}C_pq\left| \dot{\xi}(s)\right|^qds=C_p(q-1)p^q\left[f(\xi(T))-u(\xi(T))-\left(f(\xi(t))-u(\xi(t))\right)\right]+f(\xi(t))-f(\xi(T)).
\end{equation}
Let $T \to T_{x, \xi}$ and we obtain
\begin{equation}
\begin{aligned}
      &\int_{t}^{T_{x, \xi}}C_pq\left| \dot{\xi}(s)\right|^qds= -C_p(q-1)p^q\left(f(\xi(t))-u(\xi(t))\right)+f(\xi(t))\\
    \Longrightarrow &\int_{t}^{T_{x, \xi}}p\left(f(\xi(s))-u(\xi(s))\right)ds=u(\xi(t)).
\end{aligned}
\end{equation}
Then, we argue exactly as in the proof of Proposition \ref{prp:lbtime} and get a contradiction. Therefore, $T_{x, \xi}=+\infty$.
\end{proof}

Now, we are ready to prove the main result of global semiconcavity.
\begin{proof}[Proof of Theorem \ref{thm:main}]
Fix $x \in \Omega$ and let $\xi(\cdot)$ be a minimizing curve for $u(x)$. 
From Proposition \ref{prp:localsemi}, for any $T < T_{x, \xi}$, there exists a constant $C$ independent of $x, T$ such that
\begin{equation*}
    u(x+h)-2u(x)+u(x-h) \leq C \left(1+\frac{1}{T}\right) |h|^2
\end{equation*}
for all $h \in \mathbb{R}^n$ such that $[x-h, x+h] \subset \Omega$ with $|h| \leq M_{x,T}$ for some constant $M_{x,T}$ that depends on $x$ and $T$.

From Proposition \ref{prp:Tinfty}, since $T_{x, \xi} = +\infty$ for any $x\in \Omega$ and any minimizing curve $\xi$ for $u(x)$, we know $T_{x, \xi} \geq C_0$ for some constant $C_0>0$. Now take $\displaystyle T :=\frac{C_0}{2}$ and then,
\begin{equation*}
    u(x+h)-2u(x)+u(x-h) \leq C \left(1+\frac{2}{C_0}\right) |h|^2
\end{equation*}
for all $h \in \mathbb{R}^n$ such that $[x-h, x+h] \subset \Omega$ with $|h| \leq M_x$ for some constant $M_x := M_{x,C_0/2}$ that depends on $x$. Let $\phi$ be a smooth function in $\Omega$ such that $u-\phi$ has a local minimum at $x \in \Omega$. Then,
\begin{equation}\begin{aligned}
   \phi(x+h)-2\phi(x)+\phi(x-h) &\leq u(x+h)-2u(x)+u(x-h)\\
   & \leq C \left(1+\frac{2}{C_0}\right) |h|^2
\end{aligned}
\end{equation}
for all $h \in \mathbb{R}^n$ such that $[x-h, x+h] \subset \Omega$ with $|h| \leq M_x$.
Therefore, $D^2\phi(x)\leq \tilde{C}\mathrm{I}_n$ where $\displaystyle \tilde{C} = C \left(1+\frac{2}{C_0}\right)$. Hence, 
the solution $u$ to \eqref{eqn:scp} is a viscosity solution to 
\begin{equation*}
    -D^2u \geq -\tilde{C} \mathrm{I}_n \quad \text{ in } \Omega.
\end{equation*}
\end{proof}

In Theorem \ref{thm:main}, we showed that there exists a constant $\tilde{C}>0$ independent of $x$ such that for all $x \in \Omega$, 
\begin{equation}\label{eqn:key}
        u(x+h)-2u(x)+u(x-h) \leq \tilde{C} |h|^2
\end{equation}
for all $h\in\mathbb{R}^n$ such that $[x-h, x+h] \subset \Omega$ with $|h| \leq M_x$ for some constant $M_x$ that depends on $x$. However, by Definition \ref{def:semi}, in order to show $u$ is semiconcave in $\Omega$, we need \eqref{eqn:key} to hold for any $x, h \in \mathbb{R}^n$ such that $[x-h, x+h] \subset \Omega$. This is actually not hard to show. Let $v(x):=u(x)-\frac{\tilde{C}}{2}$. It is equivalent to show
\begin{equation}
    v(x+h)-2v(x)+v(x-h) \leq 0
\end{equation}
for any $x, h \in \mathbb{R}^n$ such that $[x-h, x+h] \subset \Omega$, i.e., $v$ is concave along any line segment contained in $\Omega$. From the fact that \eqref{eqn:key} holds with $h$ small enough, we know $v$ is locally concave. Local concavity does imply concavity along any line segment. We summarize the result in the following corollary.

\begin{corollary}
Under the same assumptions in Theorem \ref{thm:main}, the solution $u$ to \eqref{eqn:scp} satisfies
\begin{equation}
u(x+h)-2u(x)+u(x-h) \leq \tilde{C} |h|^2
\end{equation}
for any $x, h \in \mathbb{R}^n$ such that $[x-h, x+h] \subset \Omega$, where $\tilde{C}$ is the constant in Theorem \ref{thm:main}.
\end{corollary}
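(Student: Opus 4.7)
The idea is already sketched in the paragraph preceding the corollary: introduce the auxiliary function $v(x) := u(x) - \frac{\tilde C}{2}|x|^2$, observe that the parallelogram identity $|x+h|^2 + |x-h|^2 - 2|x|^2 = 2|h|^2$ converts the desired inequality into
\begin{equation*}
v(x+h) + v(x-h) - 2v(x) \leq 0 \qquad \text{for all } x, h \text{ with } [x-h, x+h]\subset\Omega,
\end{equation*}
that is, concavity of $v$ along every line segment contained in $\Omega$. Theorem \ref{thm:main} together with the choice of $v$ immediately gives the \emph{local} version: for every $x\in\Omega$ there is a radius $M_x>0$ such that the above inequality holds whenever $[x-h,x+h]\subset\Omega$ and $|h|\leq M_x$.

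The remaining step is to upgrade local concavity along line segments to global concavity along line segments. Fix $x,h$ with $[x-h,x+h]\subset\Omega$, and set $g(t) := v(x+th)$ for $t\in[-1,1]$. Since $u\in C(\overline\Omega)$, $g$ is continuous, and the local property above translates to: for every $t_0\in(-1,1)$ there is $\delta=\delta(t_0)>0$ with $g(t_0+\varepsilon)+g(t_0-\varepsilon)\leq 2g(t_0)$ for $|\varepsilon|\leq\delta$. It therefore suffices to invoke the following elementary real-analysis fact: \emph{a continuous function $g:[-1,1]\to\mathbb{R}$ that is locally midpoint-concave is concave on $[-1,1]$.}

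I would prove this fact by contradiction using a connectedness argument. Suppose concavity fails, so there exist $p<q$ in $[-1,1]$ and an affine $\ell$ with $\ell(p)=g(p)$, $\ell(q)=g(q)$ but $\psi(r):=g(r)-\ell(r)<0$ at some $r\in(p,q)$. Since adding an affine function preserves local midpoint-concavity, $\psi$ is locally midpoint-concave and continuous on $[p,q]$, with $\psi(p)=\psi(q)=0$ and minimum value $m:=\min_{[p,q]}\psi \leq \psi(r)<0$. Let $E:=\{t\in(p,q):\psi(t)=m\}$. The set $E$ is nonempty and closed in $(p,q)$. At any $s\in E$, local midpoint-concavity gives $\psi(s+\varepsilon)+\psi(s-\varepsilon)\leq 2\psi(s)=2m$ for small $\varepsilon$, while $s$ being a global minimizer forces $\psi(s\pm\varepsilon)\geq m$, so $\psi\equiv m$ on a neighborhood of $s$; hence $E$ is open. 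By connectedness $E=(p,q)$, and continuity gives $\psi(p)=m<0$, contradicting $\psi(p)=0$.

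\textbf{Main obstacle.} There is essentially no obstacle of substance: the argument is purely elementary once the quadratic shift $v=u-\frac{\tilde C}{2}|\cdot|^2$ and the one-dimensional restriction $g(t)=v(x+th)$ are in place. The only point that needs care is the passage from local midpoint-concavity to global concavity along a segment, handled above by the clopen/connectedness argument applied to the level set of the minimum of $\psi$; this replaces any appeal to $C^2$ regularity of $v$ (which we do not have) by a purely topological fact about continuous functions on an interval.
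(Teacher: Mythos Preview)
Your proposal is correct and follows exactly the approach outlined in the paragraph preceding the corollary: shift by $\tfrac{\tilde C}{2}|x|^2$, restrict to a segment, and pass from local midpoint-concavity to global concavity. The paper merely asserts the last implication (``local concavity does imply concavity along any line segment''), whereas you supply the standard clopen argument for it; this is a welcome addition but not a departure from the paper's route.
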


\section{One-dimensional results and examples}\label{sec:one}

In this section, we figure out the conditions in one-dimensional space under which the solution to \eqref{eqn:scp} is not semiconcave. In particular, we show the assumption (3) in Theorem \ref{thm:main} is essentially optimal, at least for the case $p=q=2$.

\begin{theorem} \label{thm:1d}
Let $1 < p \leq 2$ and $f \in \mathrm{C}([a,b]) \cap W^{1, \infty}((a,b))$ be a semiconcave function in $(a,b) \subset \mathbb{R}$ for some constants $a<b$. Assume $\displaystyle \min_{x \in [a,b]} f(x) =f(a)=f(b)$, and $f$ is differentiable in $(a, a+\epsilon) \cup (b-\epsilon, b)$ for some $\epsilon > 0$. Let $u$ be the solution to
\begin{equation}
\left\{\begin{aligned} \label{eqn:scp1d}
    u(x)+|u'(x)|^p -f(x) \leq 0 & \quad \text{in } (a,b), \\
    u(x)+|u'(x)|^p- f(x) \geq 0& \quad \text{on } [a, b].
    \end{aligned}
    \right.
\end{equation}

\begin{enumerate}
    \item[\rm(1)] If $f'(x) \equiv 0$, $\forall x \in (a, a+\epsilon) \cup (b-\epsilon, b)$, then $u$ is semiconcave in $(a, b)$.
    \item[\rm(2)] If $\, \displaystyle \frac{|f'(x)|}{\left(f(x) -f(b)\right)^\frac{1}{q}} \to +\infty$ as $x \to b$ (or $a$) where q satisfies $\frac{1}{p}+\frac{1}{q}=1$, then $u$ is not semiconcave.
\end{enumerate}
\end{theorem}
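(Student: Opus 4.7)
The plan is to treat the two parts separately, normalizing in each case so that $\min_{[a,b]} f = 0$; then $f(a) = f(b) = 0$ and, by Lemma \ref{lem:f=0}, $u(a) = u(b) = 0$, while the sandwich $0 \le u \le f$ holds on $[a,b]$.

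For Part (1), the vanishing of $f'$ on the two boundary intervals combined with continuity forces $f \equiv 0$ on $[a, a+\epsilon] \cup [b-\epsilon, b]$, hence $u \equiv 0$ on the same set. On the compact interior $[a+\epsilon, b-\epsilon]$, Corollary \ref{cor:dis} provides a uniform semiconcavity constant. To glue across the transition points, I would use semiconcavity of $f$ at these interior minima: applying Definition \ref{def:semi} at $x = a+\epsilon$ with $f(a+\epsilon)=f(a+\epsilon-s)=0$ gives $f(a+\epsilon+s) \le C_f s^2$, and $u \le f$ then gives $u(a+\epsilon+s) \le C_f s^2$. A short case analysis on whether $[x-h, x+h]$ lies inside the zero region, inside the interior compact, or straddles a transition assembles these pieces into a single global semiconcavity constant.

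For Part (2), assume without loss of generality that the boundary point in question is $b$ and $f(b)=0$. The hypothesis $|f'(x)|/f(x)^{1/q}\to\infty$ requires $f>0$ and $|f'|$ bounded away from $0$ on a one-sided neighborhood $(b-\epsilon,b)$; since $f\in C^1$ there (a semiconcave function differentiable on an open set is $C^1$ on it) and $f(b)=\min f$, we must have $f'<0$ on $(b-\epsilon,b)$. The central step is to establish that on $(b-\epsilon,b)$ the solution satisfies $0<u<f$ and $u\in C^2$ with
\begin{equation*}
u'(x) = -\bigl(f(x)-u(x)\bigr)^{1/p},\qquad u'(x)<0.
\end{equation*}
Positivity $u>0$ follows from Lemma \ref{lem:f=0}; strictness $u<f$ follows because if $(f-u)(x_0)=0$ at some $x_0\in(b-\epsilon,b)$, then $x_0$ is an interior minimum of the nonnegative, differentiable function $f-u$ (using differentiability of $u$ at every interior point from Proposition \ref{prp:ELderiH}), so $(f-u)'(x_0)=0$, and $|u'(x_0)|^p = f(x_0)-u(x_0)=0$ forces $u'(x_0)=0$, whence $f'(x_0)=0$, contradicting $f'\neq 0$. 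The equation $|u'|^p = f-u > 0$ then keeps $|u'|$ away from $0$, and the sign of $u'$ must be negative because $u>0$ on $(b-\epsilon,b)$ and $u(b)=0$.

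Differentiating the ODE, legitimate since $f\in C^1$ and $f-u>0$, yields
\begin{equation*}
u''(x) = -\frac{f'(x)}{p\bigl(f(x)-u(x)\bigr)^{1/q}} - \frac{\bigl(f(x)-u(x)\bigr)^{(q-2)/q}}{p}.
\end{equation*}
The second term is bounded since $(q-2)/q\ge 0$ and $f-u\to 0$. For the first, using $0\le f-u\le f$ and $f'<0$,
\begin{equation*}
-\frac{f'(x)}{p(f(x)-u(x))^{1/q}}\;\ge\;\frac{|f'(x)|}{p\,f(x)^{1/q}}\;\longrightarrow\;+\infty
\end{equation*}
by hypothesis, so $u''(x)\to +\infty$ as $x\to b^-$, which rules out any global semiconcavity estimate for $u$ and proves Part (2). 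I expect the main obstacle to be the rigorous verification of strict positivity $f-u>0$ on the one-sided neighborhood of $b$ together with the sign of $u'$; these are what unlock the classical ODE and the $C^2$ regularity from which the blow-up of $u''$ follows by direct calculation.
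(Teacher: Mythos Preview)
Your overall strategy matches the paper's for both parts: Part~(1) via $u\equiv 0$ on the boundary intervals together with Corollary~\ref{cor:dis} on the interior; Part~(2) by establishing $u'=-(f-u)^{1/p}$ on a one-sided neighbourhood of $b$, differentiating, and showing $u''\to+\infty$. Your final $u''$ computation is correct and agrees with the paper's. However, the two steps you yourself flagged as obstacles are genuine gaps, and the paper fills them differently from what you sketch.

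\textbf{The strict inequality $u<f$.} Your appeal to Proposition~\ref{prp:ELderiH} for ``differentiability of $u$ at every interior point'' misreads that result: it only gives differentiability of $u$ at $\xi(t)$ for $t>0$ along a minimizing curve, and there is no reason a putative point $x_0$ with $u(x_0)=f(x_0)$ lies on such a curve at positive time. The paper avoids this entirely: since $u\le f$ with equality at $x_0$, the $C^1$ function $f$ itself is an admissible test function for the viscosity subsolution condition at $x_0$, yielding $u(x_0)+|f'(x_0)|^p-f(x_0)\le 0$, i.e.\ $|f'(x_0)|^p\le 0$, contradicting $f'(x_0)\neq 0$.

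\textbf{The sign of $u'$.} Knowing $u>0$ on $(b-\epsilon,b)$ and $u(b)=0$ only forces $u'<0$ \emph{somewhere}; it does not rule out sign changes of $u'$, and without that you cannot pass to a classical ODE. The paper argues viscosity-style: if at points of differentiability one had $u'(x_0)=-(f-u)^{1/p}(x_0)<0$ and $u'(x_1)=+(f-u)^{1/p}(x_1)>0$ with $x_0<x_1$, then choosing $0<c<\min_{[x_0,x_1]}(f-u)^{1/p}$ the function $u(x)-cx$ has an interior minimum at some $x_2\in(x_0,x_1)$, and the supersolution condition gives $u(x_2)+c^p-f(x_2)\ge 0$, contradicting $c^p<f(x_2)-u(x_2)$. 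Thus $u'$ cannot switch from negative to positive, so either $u'=+(f-u)^{1/p}$ a.e.\ on all of $(b-\epsilon_0,b)$ or $u'=-(f-u)^{1/p}$ a.e.\ on some $(b-\tilde\epsilon,b)$. The first alternative integrates to $u(x)=-\int_x^b(f-u)^{1/p}\,dt<0$, contradicting $u\ge 0$. This pins down $u'=-(f-u)^{1/p}$ a.e.\ near $b$, and the fundamental theorem of calculus then upgrades it to the smooth ODE from which your blow-up computation proceeds.
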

\begin{proof}
Without loss of generality, we can assume $\displaystyle \min_{[a,b]} f(x) =f(a)=f(b)=0$.
\begin{enumerate}
    \item If $f'(x) \equiv 0$, $\forall x\in (a, a +\epsilon) \cup (b-\epsilon,b)$, then $f(x) \equiv 0$ on $[a, a+\epsilon] \cup [b-\epsilon, b]$, which implies $u \equiv 0$ on $[a, a+\epsilon] \cup [b-\epsilon, b]$. Corollary \ref{cor:dis} then indicates $u$ is semiconcave on $[a, b]$.
    \item Without loss of generality, assume $\displaystyle \frac{\left|f'(x)\right|}{f(x)^\frac{1}{q}} \to +\infty$ as $x \to b$. Hence, $\left|f'(x)\right| >0$ on $(b-\epsilon_0, b)$ for some $\epsilon_0 \in (0, \epsilon)  $. In fact, $f'(x)<0$ on $(b-\epsilon_0, b)$ by Darboux's theorem since $f > 0$ on $(b-\epsilon_0, b)$. From Lemma \ref{lem:supsub}, we know $f$ is a supersolution to 
    \begin{equation}\label{eqn:1deq}
        u(x)+|u'(x)|^p -f(x)=0
    \end{equation} on $[a,b]$ and one can verify $v:=\min_{[a,b]}f(x)=0$ is a subsolution to \eqref{eqn:1deq} in $(a,b)$. By the comparison principle of \eqref{eqn:1deq}, we have $0\leq u \leq f$. 
    
    We claim $u < f$ on $(b-\epsilon_0, b)$. Suppose $u(x_0)=f(x_0)$ for some $x_0 \in (b-\epsilon_0, b)$. Using $f$ as a test function for $u$ at $x_0$, we get
    \begin{equation*}
    f(x_0)+|f'(x_0)|^p-f(x_0)>0 
    \end{equation*}
    which contradicts the fact that u is a subsolution to \eqref{eqn:scp1d}.
    
    Since $u$ is Lipschitz, hence $u$ is differentiable almost everywhere. Next, we show that $u$ is actually smooth near $x=b$. First, we prove that $u'=-\left(f-u\right)^\frac{1}{p}$ at points where $u$ is differentiable near $x=b$. Suppose there exists $x_0, x_1 \in (b-\epsilon_0, b)$ with $x_0 <x_1$ such that $u'(x_0)=-\left(f(x_0)-u(x_0)\right)^\frac{1}{p}$ and $u'(x_1)=\left(f(x_1)-u(x_1) \right)^\frac{1}{p}$. Choose $c$ such that $0 < c< \min_{x \in [x_0, x_1]} \left(f(x)-u(x)\right)^\frac{1}{p}$. Then $u(x) - cx$ attains a minimum at some point $x_2 \in (x_0, x_1)$. Using $cx$ as a test function, we get
    \begin{equation*}
        u(x_2)+c^p-f(x_2) < 0
    \end{equation*}
    which contradicts the fact of $u$ being a supersolution to \eqref{eqn:scp1d}. Therefore, either $u'=-\left(f-u\right)^\frac{1}{p}$ at all the points where $u$ is differentiable in $(b-\tilde{\epsilon}, b)$ for some $\tilde{\epsilon}>0$ or $u'=(f-u)^\frac{1}{p}$ at every point where $u$ is differentiable in $(b-\epsilon_0, b)$. If $u'=(f-u)^\frac{1}{p}$ a.e. in $(b-\epsilon_0, b)$, then 
    \begin{equation}
        u(x)=-\int_x^b(f(t)-u(t))^\frac{1}{p}dt <0
    \end{equation}
    which contradicts that $u \geq 0$. Hence, $u'=-\left(f-u\right)^\frac{1}{p}$ a.e. in $(b-\tilde{\epsilon}, b)$ for some $\tilde{\epsilon}>0$.
    Since $u$ is Lipschitz and $u'=-\left(f-u\right)^\frac{1}{p}$ a.e. in $(b-\tilde{\epsilon}, b)$, by the fundamental theorem of calculus, 
    \begin{equation*}
           u(x)=\int_x^b(f(t)-u(t))^\frac{1}{p}dt, 
    \end{equation*}
    for $x \in \left[b-\tilde{\epsilon}, b\right]$ and hence $u$ is smooth in $(b-\tilde{\epsilon}, b)$.
    
    If we differentiate \eqref{eqn:1deq} for $x \in (b-\tilde{\epsilon}, b)$, we have 
    \begin{equation}\label{eqn:udp}
    \begin{aligned}
             &u'+p\sign(u')|u'|^{p-1}u''-f'=0\\
             \Longrightarrow &u''=\frac{f'}{p\sign(u')|u'|^{p-1}}-\frac{1}{p}|u'|^{2-p}=\frac{f'}{-p(f-u)^\frac{p-1}{p}}-\frac{1}{p}|u'|^{2-p}.
    \end{aligned}
    \end{equation}
    Since $u$ is Lipschitz, $u \geq 0$, and $f'<0$ in $(b-\tilde{\epsilon}, b)$, we have 
    \begin{equation}
       \displaystyle u'' \geq \frac{-f'}{p(f-u)^\frac{1}{q}} -C \geq \frac{|f'|}{pf^\frac{1}{q}} - C \to +\infty \text{ as } x \to b.
    \end{equation}
    Therefore, $u''(x) \to +\infty$ as $x \to b$ and hence $u$ is not semiconcave.



\end{enumerate}
\end{proof}

The following is a specific example with $p=q=2$ where $f$ is semiconcave in the domain but the solution $u$ to \eqref{eqn:scp} is not globally semiconcave, which illustrates that the condition on $Df$ is necessary for the global semiconcavity of the constrained solutions. Also, it takes finite time for any minimizing curve to hit the boundary in this example.

\begin{example}\label{ex:nex}
Let $\Omega :=(-1, 1)$ and $f:[-1,1] \to \mathbb{R}$ defined by $f(x): =1-|x|$. Consider the solution $u$ to the equations
\begin{equation}\label{eqn:nex}
    \left\{\begin{aligned}
    u+\left|u'\right|^2-\left(1-|x|\right) & \leq 0 \quad \text{ in } (-1, 1), \\
    u+\left|u'\right|^2-\left(1-|x|\right) & \geq 0 \quad \text{ on } [-1, 1].
    \end{aligned}
    \right.
\end{equation}
Note that $f$ is semiconcave in $(-1, 1)$. Since $\displaystyle  \frac{|f'|}{|f|^\frac{1}{2}}=\frac{1}{\left(1-|x|\right)^\frac{1}{2}} \to +\infty $ as $|x| \to 1$, from part $(2)$ of Theorem \ref{thm:1d}, we know $u$ is not semiconcave and the semiconcavity constant blows up near $x=-1, 1$. 

More precisely, from part $(2)$ of Theorem \ref{thm:1d} and its proof, we know $u<f$ on $(-1,0) \cup (0,1)$. In fact, for this specific example, $u$ is smooth in $(-1,0) \cup (0,1)$. In particular, $u'=-(f-u)^\frac{1}{2}$ for $x \in (0, 1)$ and $u'=(f-u)^\frac{1}{2}$ for $x\in (-1, 0)$. 


Differentiate 
\begin{equation}\label{eqn:exeq}
    u+\left|u'\right|^2-\left(1-|x|\right) =0
\end{equation}in $(0, 1)$ to get
\begin{equation}\label{eqn:exudpp}
\begin{aligned}
      &u'+2u'u''+1=0\\
\Longrightarrow & u'' =\frac{-1-u'}{2u'}=\frac{1}{-2u'}-\frac{1}{2} \to +\infty 
\end{aligned}
\end{equation}
as $x \to 1^-$ since $u' \to 0^-$ as $x \to 1^-$.


Differentiate \eqref{eqn:exeq} in $(-1, 0)$ and obtain
\begin{equation}\label{eqn:exudpn}
    \begin{aligned}
      &u'+2u'u''-1=0\\
    \Longrightarrow & u'' =\frac{1-u'}{2u'}=\frac{1}{2u'}-\frac{1}{2} \to +\infty 
    \end{aligned}
\end{equation}
as $x \to -1^+$ since $u' \to 0^+$ as $x \to -1^+$. Hence, $u$ is not semiconcave. 

We claim that any minimizing curve for $u(x)$ with $x\in (-1,1)$ takes finite time to hit the boundary. Indeed, suppose there exists a minimizing curve $\gamma$ that takes infinite time to hit the boundary and $x_0:=\gamma(0) \in (-1, 1)$. We first prove that $|\gamma(s)| \to 1$ as $s \to +\infty$.

\begin{itemize}
    \item[\rm(1)] If $x_0:=\gamma(0)=0$, then $\gamma(\cdot)$ will not stay at $x=0$. We can see this by constructing a path that costs less than $\tilde{\gamma}(s): \equiv 0$. The cost for $\tilde{\gamma}(\cdot)$ is
    \begin{equation*}
         \int_0^\infty e^{-s} \left(\frac{\left|\dot{\tilde{\gamma}}(s)\right|^2}{4}+f(\tilde{\gamma}(s))\right)ds=\int_0^\infty e^{-s}f(0)ds= \int_0^\infty e^{-s}ds=1.
    \end{equation*}
    However, the cost for the path $\hat{\gamma}(\cdot)$ defined by
\begin{equation*}
\hat{\gamma}(s)=\left\{\begin{aligned}
   &s, \quad 0\leq s\leq 1\\    &1, \quad s \geq 1
    \end{aligned}
   \right.
\end{equation*}
is
\begin{equation*}
     \int_0^\infty e^{-s} \left(\frac{\left|\dot{\hat{\gamma}}(s)\right|^2}{4}+f(\hat{\gamma}(s))\right)ds=\int_0^1 e^{-s}\left(\frac{1}{4}+1-s\right)ds= \frac{1}{4}+\frac{3}{4}e^{-1}<1.
\end{equation*}
Therefore, the minimizing curve $\gamma$ will not stay at $x=0$.

\item[\rm(2)]If $\gamma(t) \in (0,1)$ for some $t\geq 0$, then 
\begin{equation}\label{eqn:exgdot}
\dot{\gamma}(t)=-2u'\left(\gamma(t)\right)=2\left(f\left(\gamma(t)\right)-u\left(\gamma(t)\right)\right)^\frac{1}{2} 
\end{equation}
by part $(2)$ of Proposition \ref{prp:ELderiH} and Lemma \ref{lem:c1}. Hence, $\dot{\gamma}(t)>0$.
Therefore, $\gamma$ is increasing. Hence, we have $\lim_{s\to \infty} \gamma(s)=1$. Since otherwise, suppose $\lim_{s \to \infty} \gamma(s)=x_1<1$. But $\dot{\gamma}(s) \geq \inf_{s\geq t} 2 \left(f\left(\gamma(s)\right)-u\left(\gamma(s)\right)\right)^\frac{1}{2} >0$, $\forall \, s \geq t$ and $\lim_{s \to \infty}\dot{\gamma}(s)=2\left(f\left(x_1)\right)-u\left(x_1\right)\right)^\frac{1}{2}>0$, which contradicts the assumption that $\lim_{s \to \infty} \gamma(s)=x_1<1$. Therefore, $\lim_{s\to \infty} \gamma(s)=1$.
\item[\rm(3)]If $\gamma(t) \in (-1,0)$ for some $t\geq0$, then by a similar argument as in part $(2)$, $\lim_{s\to \infty}\gamma(s)=-1$.
\end{itemize}



Therefore, $|\gamma(s)| \to 1$ as $s \to +\infty$. Without loss of generality, assume $\gamma(s) \to 1$ as $s \to +\infty$. Then, for every $x$ near $1$, there exists a minimizing curve for $u(x)$ that takes infinite time to hit the boundary. Proposition \ref{prp:localsemi} then implies $u''$ is bounded near $x=1$, which is a contradiction to \eqref{eqn:exudpp}.  
\end{example}

In Theorem \ref{thm:main}, we assume $f \equiv \min_{x\in[a,b]}f(x)$ on the boundary. The situation is more complicated when the minimum is only attained in the interior of the domain. It turns out that whether $u$ is semiconcave depends on the dynamics, which is illustrated in the following two examples.

\begin{example} \label{ex:compute}
Let $\Omega :=(-1, 1)$ and $f:[-1,1] \to \mathbb{R}$ defined by 
\begin{equation*}
    f(x):=\left\{\begin{aligned}
     &\left(|x|-\frac{1}{2}\right)^2 \quad \text{ in } \left[-1, -\frac{1}{2}\right]\cup \left[\frac{1}{2},1\right]\\
     &0 \qquad \qquad \qquad \text{ in } \left(-\frac{1}{2}, \frac{1}{2}\right).
    \end{aligned}
    \right.
\end{equation*}
Consider the solution $u$ to the equations
    \begin{equation}
        \left\{
  \begin{aligned}
    u(x) + \frac{1}{2}|u^\prime(x)|^2 -f(x) &\leq 0 \, \quad \qquad \text{in } (-1, 1), \\
              u(x) + \frac{1}{2}|u^\prime (x)|^2 -f(x) &\geq 0 \qquad  \quad \text{on } [-1, 1].
  \end{aligned}
\right.
    \end{equation}
The constant $\frac{1}{2}$ in front of $|u'|^2$ is just to make the computations easier and does not change the nature of the equations. In this example, we can explicitly compute the solution $u$ which is given by
\begin{equation*}
    u(x):=\left\{\begin{aligned}
     &\frac{1}{2}\left(|x|-\frac{1}{2}\right)^2 \quad \text{ in } \left[-1, -\frac{1}{2}\right]\cup \left[\frac{1}{2},1\right]\\
     &0 \qquad \qquad \qquad \text{ in } \left(-\frac{1}{2}, \frac{1}{2}\right),
    \end{aligned}
    \right.
\end{equation*}
and all the minimizing curves. Detailed computations are given in Appendix. Note in this case, all the minimizing curves for $x \in \left(-1, -\frac{1}{2}\right) \cup \left(\frac{1}{2},1\right)$ take infinite time to hit $x= -\frac{1}{2}, \frac{1}{2}$. In particular, for any minimizing curve $\xi(\cdot)$ for $u(x)$ with $x\in\left(-1, -\frac{1}{2}\right)$, $\lim_{s\to \infty} \xi(s)=-\frac{1}{2}$. Similarly, for any minimizing curve $\xi(\cdot)$ for $u(x)$ with $x\in\left(\frac{1}{2},1\right)$, $\lim_{s\to \infty} \xi(s)=\frac{1}{2}$.

As we can see, $f$ attains its minimum $0$ in the interior of $(-1,1)$ and $u$ is semiconcave.
\end{example}

\begin{example}
Let $\Omega :=(-1, 1)$. We first consider $f_1:[-1,1] \to \mathbb{R}$ defined by $f_1(x)=x^2$ and the equations
  \begin{equation}
        \left\{
  \begin{aligned}
    u(x) + \frac{1}{2}|u^\prime(x)|^2 -x^2 &\leq 0 \, \quad \qquad \text{in } (-1, 1), \\
              u(x) + \frac{1}{2}|u^\prime (x)|^2 -x^2 &\geq 0 \qquad  \quad \text{on } [-1, 1].
  \end{aligned}
\right.
    \end{equation}
In this case, one can verify the solution $u$ is $\displaystyle u(x)=\frac{1}{2}x^2$.
Now we change $f_1$ a little bit near $x=-1$ and $1$. Consider $f_2:[-1,1] \to \mathbb{R}$ defined by
\begin{equation*}
    f_2(x):=\left\{\begin{aligned}
     &x^2 \qquad \qquad \quad \text{ in } \left[-\frac{1}{2}, \frac{1}{2}\right],\\
     &-\frac{|x|}{4}+\frac{3}{8}  \qquad \text{ in } \left[-1, -\frac{1}{2}\right) \cup  \left(\frac{1}{2}, 1\right].
    \end{aligned}
    \right.
\end{equation*}
Note that $f_2$ attains its minimum only at $x=0$. We claim that the solution $u$ to
  \begin{equation}
  \label{eqn:exp3}
        \left\{
  \begin{aligned}
    u(x) + \frac{1}{2}|u^\prime(x)|^2 -f_2(x)&\leq 0 \, \quad \qquad \text{in } (-1, 1), \\
              u(x) + \frac{1}{2}|u^\prime (x)|^2 -f_2(x) &\geq 0 \qquad  \quad \text{on } [-1, 1],
  \end{aligned}
\right.
    \end{equation}
is not semiconcave.

The following are some observations.
\begin{itemize}
    \item Near $x=0$, we have $u'=\sqrt{2(f_2-u)}$ at the points where $u$ is differentiable, that is, $u'=\sqrt{2(f_2-u)}$ almost everywhere near $x=0$. This is because $u>0$ at every point in $[-1,1]$ except at $x=0$. By a similar argument as in the proof of part $(2)$ of Theorem \ref{thm:1d}, we can show $f_2>u$ in $\displaystyle \left(0,\frac{1}{2}\right)$ since $f_2'\neq0$. Moreover, we cannot have $u'(x_0)<0$ and $u'(x_1)>0$ for some $x_0<x_1$. Hence, $u'=\sqrt{2(f_2-u)}>0$ almost everywhere in $(-\epsilon, \epsilon)$ for some $\epsilon>0$. Therefore, $u$ is smooth in $(-\epsilon, \epsilon)$ and $u(x)=\frac{1}{2}x^2$ in $(-\epsilon, \epsilon)$.
    \item $u'=\sqrt{2(f_2-u)}$ almost everywhere in $\left[0,\frac{1}{2} \right)$ and is smooth in $\left[0,\frac{1}{2} \right)$. Since otherwise, suppose $u'=-\sqrt{2(f_2-u)}$ at $x_0 \in \left[0, \frac{1}{2}\right)$. Then, $u(x_0) \leq \frac{1}{2}x_0^2 \leq \frac{1}{8}$. Again, because we cannot have $u'(x_0)<0$ and $u'(x_1)>0$ for some $x_0<x_1$, $u'=-\sqrt{2(f_2-u)}$ on $[x_0,1]$. Then, $u(1) <\frac{1}{8}$ and $u'(1)<0$, which contradicts the fact that $u$ is a supersolution to \eqref{eqn:exp3} on $[-1, 1]$.
    \item $u'=-\sqrt{2(f_2-u)}$ at some point $ x_1 \in \left(\frac{1}{2}, 1\right)$ where $u$ is differentiable. Suppose otherwise, that is, $u'=\sqrt{2(f_2-u)}$ at every point where $u$ is differentiable in $\left(\frac{1}{2}, 1\right)$. By a similar argument as in the proof of part $(2)$ of Theorem \ref{thm:1d}, we can show $f_2>u$ in $ \left(\frac{1}{2},1\right)$ since $f_2'\neq0$. But if we run $u$ according to $u'=\sqrt{2(f_2-u)}>0$ from $x=\frac{1}{2}$, then $u$ and $f_2$ must intersect at some point $x_2 \in \left(\frac{1}{2}, 1\right)$ as $u\left(\frac{1}{2}\right)=\frac{1}{8}$ and $f(1)=\frac{1}{8}$. This contradicts the fact that $f_2>u$ in $\left(\frac{1}{2},1\right)$.
    \item Since $u'=-\sqrt{2(f_2-u)}$ at some point $ x_1 \in \left(\frac{1}{2}, 1\right)$ where $u$ is differentiable, $u'=-\sqrt{2(f_2-u)}$ at every point $x \in [x_1,1)$ where $u$ is differentiable, i.e., $u$ is smooth near $x=1$ and $u'=-\sqrt{2(f_2-u)}$. Moreover, $u' \to 0^-$ as $x \to 1^-$ since otherwise it contradicts the fact that $u$ is a subsolution on $[-1, 1]$. Near $x=1$, we have
    \begin{equation*}
        u''=\frac{f'_2-u'}{u'}=\frac{1}{4\sqrt{2\left(-\frac{x}{4}+\frac{3}{8}-u\right)}}-1 \to +\infty
    \end{equation*} 
    as $x \to 1^-$. We can get the same conclusion near $x=-1$ by similar arguments.
    
\end{itemize}
In summary, for this example, the solution $u$ is not semiconcave and the semiconcavity constant blows up as $|x| \to 1$ .
\end{example}

\renewcommand\thesection{A}

\section{Appendix}

\subsection{Existence of the solution to \eqref{eqn:scp}}

\begin{theorem}
Let $\Omega \subset \mathbb{R}^2$ be an open, bounded, and connected domain. Let $H :\overline{\Omega} \times \mathbb{R}^n \to \mathbb{R
}$ satisfy (H1). Then there exists a constrained viscosity solution $u \in \mathrm{C}(\overline{\Omega})\cap \mathcal{W}^{1,\infty}(\Omega)$ to \eqref{eqn:scp}.
\end{theorem}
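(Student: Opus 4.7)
My plan is to construct $u$ via Perron's method adapted to the state-constraint setting. The three ingredients are a bounded sub/supersolution pair to bracket the Perron family, a uniform Lipschitz estimate extracted from (H1), and a bump-up argument at both interior and boundary points. For the setup, let $M_0 := \sup_{x \in \overline\Omega} |H(x,0)|$, which is finite by continuity of $H$ on the compact set $\overline\Omega$. Then $v_- \equiv -M_0$ and $v_+ \equiv M_0$ are respectively a classical subsolution in $\Omega$ and a classical supersolution on $\overline\Omega$. Using (H1), I would fix $R > 0$ with $\inf_{x \in \overline\Omega} H(x,\beta) > 2M_0$ for $|\beta| \geq R$; if $v \in \mathrm{USC}(\overline\Omega)$ is a viscosity subsolution with $|v| \leq M_0$ and $p \in D^+v(x)$ had $|p| \geq R$, then $v(x) + H(x,p) \leq 0$ would force $H(x,p) \leq M_0$, a contradiction. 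So every such $v$ is globally $R$-Lipschitz, and the Perron candidate
\[
u(x) := \sup \bigl\{ v(x) : v \in \mathrm{USC}(\overline\Omega),\ v \text{ is a viscosity subsolution in } \Omega,\ -M_0 \leq v \leq M_0 \bigr\}
\]
is well-defined, $R$-Lipschitz on $\overline\Omega$, and itself a viscosity subsolution of $u + H(x, Du) = 0$ in $\Omega$ by the standard envelope argument (with $u = u^*$ since $u$ is continuous).

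The key step is to verify that $u$ is a viscosity supersolution on all of $\overline\Omega$, including $\partial\Omega$. I would argue by contradiction: suppose $\phi \in C^1$ touches $u$ from below at some $x_0 \in \overline\Omega$ with $u(x_0) + H(x_0, D\phi(x_0)) < 0$. By continuity of $H$, for small $\delta, \rho > 0$ the function $\phi + \delta$ is a strict subsolution in $\mathrm{B}(x_0, \rho) \cap \Omega$. Setting
\[
\tilde u := \max\{u, \phi + \delta\} \text{ on } \overline{\mathrm{B}(x_0, \rho)} \cap \overline\Omega, \qquad \tilde u := u \text{ elsewhere on } \overline\Omega,
\]
and arranging $\phi + \delta < u$ on $\partial \mathrm{B}(x_0,\rho) \cap \overline\Omega$ (possible because $\phi(x_0) = u(x_0)$ and $\phi$ is the one touching from below) yields a member of the Perron family with $\tilde u(x_0) > u(x_0)$, contradicting maximality.

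\textbf{The main obstacle is the boundary case $x_0 \in \partial\Omega$.} On the interior the splicing above is routine, but at a boundary point one cannot bump symmetrically: the subsolution inequality for $\phi + \delta$ need only hold on $\mathrm{B}(x_0,\rho)\cap\Omega$, yet the splicing and the separation $\phi+\delta < u$ on the outer boundary of the small ball must be arranged consistently with the fact that $\Omega$ has only a one-sided neighborhood of $x_0$. This is precisely where assumption (A) enters: the interior cone $\mathrm{B}(x_0 + t\alpha(x_0), rt) \subset \Omega$ lets me perturb the test function by sliding in the direction $\alpha(x_0)$ so that the shifted bump remains a strict subsolution inside the wedge in $\Omega$, while staying strictly below $u$ on the complement. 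Making this perturbation rigorous is the technical heart of the proof; once completed, the contradiction with maximality closes the argument, giving that $u$ is the desired constrained viscosity solution, and the uniform Lipschitz bound established in the second step places $u$ in $C(\overline\Omega) \cap W^{1,\infty}(\Omega)$.
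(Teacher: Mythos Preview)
Your overall strategy---Perron's method with a uniform Lipschitz bound from (H1) and a bump-up contradiction for the supersolution property---is exactly the paper's approach. The point where you diverge is your treatment of the boundary case $x_0\in\partial\Omega$, which you flag as ``the main obstacle'' and propose to resolve via assumption~(A).

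This is a misstep for two reasons. First, (A) is not among the hypotheses of the existence theorem; only (H1) is assumed (assumption~(A) enters the paper for the \emph{comparison} principle, not for existence). Second, and more importantly, the boundary case is not actually an obstacle: the bump-up argument goes through at $x_0\in\partial\Omega$ with no modification and no cone condition. The Perron family consists of subsolutions in the \emph{open} set $\Omega$, so $\phi+\delta$ need only be a subsolution on $B(x_0,\rho)\cap\Omega$, which you already acknowledge. The separation $\phi+\delta<u$ on the annulus $\bigl(B(x_0,2\rho)\setminus B(x_0,\rho)\bigr)\cap\overline\Omega$ follows from the strict-touching normalization $u(x)-\phi(x)\ge|x-x_0|^2$ on $B(x_0,r)\cap\overline\Omega$, always achievable by replacing $\phi$ with $\phi-|x-x_0|^2$. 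The spliced $\tilde u$ is then continuous on $\overline\Omega$, is a subsolution in $\Omega$, lies in the family, and satisfies $\tilde u(x_0)>u(x_0)$, contradicting maximality of the supremum over $\overline\Omega$. The paper's proof treats interior and boundary points in a single stroke; no directional shifting along $\alpha(x_0)$ is involved.

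One minor correction to your setup: your upper barrier $v_+\equiv M_0=\sup_x|H(x,0)|$ may leave no headroom for the bump, since nothing rules out $u(x_0)=M_0$ together with $H(x_0,D\phi(x_0))<-M_0$. The paper instead caps the family by $C_1:=\sup\{-H(x,p):(x,p)\in\overline\Omega\times\mathbb R^n\}$, which is finite by (H1) and continuity on compact sets. With this choice the strict failure $\phi(x_0)+H(x_0,D\phi(x_0))<0$ forces $\phi(x_0)<-H(x_0,D\phi(x_0))\le C_1$, so $\phi+\epsilon^2\le C_1$ for $\epsilon$ small and the bumped function stays in the family.
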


\begin{proof}
Since $\Omega$ is bounded, we can deduce from $(H1)$ (or coercivity of $H$ instead) that $\max\{-H(x,p): (x, p) \in \overline{\Omega}\times \mathbb{R}^n\} \leq C_1$ for some constant $C_1 > 0$. Similarly, we know there exists a constant $C_2 >0$ such that $\sup_{x \in \Omega} |H(x,0)| \leq C_2$.
Again from $(H1)$ (or coercivity of $H$ instead), we can find a constant $C_3>0$ such that
whenever $H(x,p) \leq \max\{C_1, C_2\}$ for some $x \in \overline{\Omega}$, we have $|p| <C_3$.

Define $\mathcal{A}: = \{v \in \mathrm{C}(\overline{\Omega})\cap \mathcal{W}^{1,\infty}(\Omega) : -C_2 \leq v(x) \leq C_1, \|Dv\|_{L^\infty} \leq C_3$, and $v$ is a viscosity solution to $v(x)+H(x, Dv) \leq 0$ in $ \Omega. \}$ and $u(x):= \sup\{v(x): v\in \mathcal{A}\}$. It is not hard to show $u$ is a viscosity subsolution to \eqref{eqn:scpeq} in $\Omega$. It remains to show $u$ is a supersolution to \eqref{eqn:scpeq}on $\overline{\Omega}$.

Suppose $u$ is not a supersolution on $\overline{\Omega}$. Then there exists $x_0 \in \overline{\Omega}$, $r>0$, and $\phi \in \mathrm{C}^1 (\overline{\Omega})$ such that 
\begin{itemize}
    \item[(a)] $u(x_0) = \phi(x_0)$,
    \item[(b)] $\phi (x_0) +H(x_0, D\phi(x_0)) <0$,
    \item[(c)] $-C_2-1 \leq \phi(x) < C_1, \forall x\in \mathrm{B}(x_0, r)\cap \overline{\Omega}$,
    \item[(d)] $u(x)-\phi(x) \geq |x-x_0|^2$, $\forall x \in \mathrm{B}(x_0,r) \cap \overline{\Omega}$,
    \item[(e)] $\|D\phi(x)\| \leq C_3, \forall x \in \mathrm{B}(x_0,r)\cap \overline{\Omega}$.
\end{itemize}  
(c) is possible because $\phi(x_0) < -H(x_0,D\phi(x_0)) \leq C_1$.
    Choose $\epsilon$ small enough so that $\forall x \in \mathrm{B}(x_0, 2\epsilon)$,
\begin{equation*}
    \left\{\begin{aligned}
    &\phi(x)+\epsilon^2<C_1,\\
    &\phi(x)+\epsilon^2+H(x, D\phi(x)) <0.
    \end{aligned}
    \right.
\end{equation*}
Hence, $\phi + \epsilon^2$ is a subsolution to \eqref{eqn:scpeq} in $\mathrm{B}(x_0,2\epsilon)\cap \overline{\Omega}$. From (c) above, $u(x) > \phi(x) +\epsilon^2$, $\forall x \in \mathrm{B}(x,2\epsilon) \setminus \mathrm{B}(x, \epsilon)$.

Define \begin{equation}
   v(x):= \left\{\begin{aligned}
    &\max\{u(x),\phi(x)+\epsilon^2\}, \quad \text{if } x \in \mathrm{B(x_0,2\epsilon)} \cap \overline{\Omega},\\
    &u(x), \qquad \qquad \qquad \qquad \quad \text{if } x \in \overline{\Omega}\setminus \mathrm{B(x_0, 2\epsilon)}.
    \end{aligned}
    \right.
\end{equation}
We see $v$ is a subsolution to \eqref{eqn:scpeq} in $\Omega$, which is larger than $u$. This contradicts the definition of $u$ and therefore, $u$ is a supersolution to \eqref{eqn:scpeq} on $\overline{\Omega}$.
\end{proof}

\subsection{Regularity of minimizing curves}
\begin{proof}[Proof of Lemma \ref{lem:c1}]

The optimal control formula for the solution $u$ is
\begin{equation}
    u(x) = \inf \left\lbrace \int_0^\infty e^{-s}\left(C_p|\dot{\gamma}(s)|^{q} +f(\gamma(s))\right)ds: \gamma\in \mathrm{AC}([0,\infty);\overline{\Omega}), \gamma(0) = x\right\rbrace,
\end{equation}
where 
\begin{equation*}
    C_p = q^{-1} p^{-\frac{q}{p}} \qquad\text{and}\qquad \frac{1}{p} + \frac{1}{q} = 1.
\end{equation*}

Fix $b >0$ such that $\xi(s) \in \Omega, \forall \, s \in [0,b]$, and define $$\displaystyle I_b[\gamma] := \int_0^b e^{-s} \Big( C_p|\dot{\gamma}(s)|^{q} +f(\gamma(s)) \Big)ds.$$
Since $\xi$ is a minimizer of \eqref{eqn:opf}, $\xi$ is also a minimizer of the problem
\begin{equation}
    \min \left\{ I_b [\gamma] ; \gamma \in \mathrm{AC}\left([0, b]; \Omega \right) , \gamma(0) = x, \gamma(b) =\xi(b) \right\}.
\end{equation}
Let $\zeta \in \mathrm{C}^\infty\left([0, b]; \mathbb{R}^n \right)$ with $ \zeta(0) = 0, \zeta(b) =0$ and $\tau \in \mathbb{R}$. Hence, $\xi(s) +\tau \zeta(s) \in \Omega$, $ \forall s \in [0,b]$, for $|\tau|$ small enough.

By calculus of variation, we know
\begin{equation*}
\begin{aligned}
    0&=\left.\frac{d}{d\tau}\left( I[\xi +\tau \zeta]\right) \right|_{\tau=0}\\
    &=\int_0^b e^{-s} \left( C_p q \left| \dot{\xi}(s)\right|^{q-2} \dot{\xi} (s) \cdot \dot{\zeta}(s) +Df\left( \xi (s)\right) \cdot \zeta (s) \right) ds,\\
\end{aligned}
\end{equation*}
which is true for any $\zeta \in \mathrm{C}^\infty\left([0, b]; \mathbb{R}^n \right)$ with $ \zeta(0) = 0, \zeta(b) =0$.

Therefore,
\begin{equation} \label{eqn:ELc1}
    e^{-t} C_p q \left| \dot{\xi}(t)\right|^{q-2} \dot{\xi}(t) = \int_0^te^{-s} Df\left(\xi(s)\right)ds+\tilde{c}
\end{equation}
for a.e. $t \in [0,b]$.


$\forall \, t\in [0,b]$ and $h>0$ small, from the optimal control formula of $u(x)$, we have
\begin{equation}
    \frac{e^{-t}u\left(\xi(t)\right)-e^{-(t+h)}u(\xi(t+h))}{h}=\frac{1}{h}\int_t^{t+h}e^{-s}C_p\left|\dot{\xi}(s)\right|^{q} ds.
\end{equation}
Let $\phi \in \mathrm{C}^1(\mathbb{R}^n)$ such that $u-\phi$ has a local minimum at $\xi(t)$ and $u\left(\xi(t)\right)=\phi\left(\xi(t)\right)$.
Then,
\begin{equation}
    \begin{aligned}
     &\frac{e^{-t}u\left(\xi(t)\right)-e^{-(t+h)}u(\xi(t+h))}{h} \leq \frac{e^{-t}\phi\left(\xi(t)\right)-e^{-(t+h)}\phi(\xi(t+h))}{h}\\
     \Longrightarrow & \frac{1}{h}\int_t^{t+h}e^{-s}C_p\left|\dot{\xi}(s)\right|^{q} ds \leq \frac{e^{-t}\phi\left(\xi(t)\right)-e^{-(t+h)}\phi(\xi(t+h))}{h}
    \end{aligned}
\end{equation}
Since $\xi$ is differentiable a.e. in $(0, +\infty)$, at the point where $\xi$ is differentiable, let $h \to 0^+$ and get
\begin{equation}
    \begin{aligned}
      &e^{-t}C_p\left|\dot{\xi}(t)\right|^{q} \leq e^{-t}\phi\left(\xi(t)\right)-e^{-t}D\phi\left(\xi(t)\right) \cdot \dot{\xi}(t)\\
      \Longrightarrow & C_p\left|\dot{\xi}(t)\right|^{q} \leq \phi\left(\xi(t)\right)-D\phi\left(\xi(t)\right) \cdot \dot{\xi}(t).
    \end{aligned}
\end{equation}

Since there exists a constant $C>0$ such that
\begin{equation}
    ||u||_{L^{\infty}}+||Du||_{L^{\infty}} \leq C,
\end{equation}
we have 
\begin{equation}
    \left|\dot{\xi}(t)\right|^{q} \leq C+C  \left|\dot{\xi}(t)\right|
\end{equation}
for some constant $C>0$ independent of the curve $\xi$ and time $t$. Since $q \geq 2$, $ \left\|\dot{\xi}\right\|_{L^{\infty}} \leq C$ for some constant $C>0$. This bound is independent of the specific curve $\xi$. Therefore, $\xi$ is Lipschitz. 

To see why $\xi$ is $C^1$, note that 
$$D_vL(x,v) = e^{-t}C_pq|v|^{q-2}v$$
is injective in $v$. Together with \eqref{eqn:ELc1}, we can show $\xi \in \mathrm{C}^1([0,b])$. See Theorem 6.2.5 in \cite{cannarsa2004semiconcave} for details.
\end{proof}

\subsection{An example of solving the first-order state-constraint equation in 1D}

Let u be the solution to
    \begin{equation}
        \left\{
  \begin{aligned}
    u(x) + \frac{1}{2}|u^\prime(x)|^2 -f(x) &\leq 0 \, \quad \qquad \text{in } (-1, 1), \\
              u(x) + \frac{1}{2}|u^\prime (x)|^2 -f(x) &\geq 0 \qquad  \quad \text{on } [-1, 1],
  \end{aligned}
\right.
    \end{equation}
    where $f$ is defined by
\begin{equation*}
    f(x):=\left\{\begin{aligned}
     &\left(|x|-\frac{1}{2}\right)^2 \quad \text{ in } \left[-1, -\frac{1}{2}\right]\cup \left[\frac{1}{2},1\right]\\
     &0 \qquad \qquad \qquad \text{ in } \left(-\frac{1}{2}, \frac{1}{2}\right).
    \end{aligned}
    \right.
\end{equation*}
    We solve this equation by deterministic optimal control formula and Euler-Lagrange equation.
    
\textbf{STEP 1. Find the Euler-Lagrange equation.}

According to the optimal control formula for the state-constraint problem,
\begin{equation}
\label{ocf}
    u(x)=\inf \left\{ \left.I[\gamma]: =\int_0^\infty e^{-s} \left( \frac{\left( \dot{\gamma}(s) \right)^2}{2} + f(\gamma (s)) \right)ds \right| \gamma \in \mathrm{AC}([0, \infty); [-1,1]), \gamma(0)=x \right\}.
\end{equation}
Let $x_0 \in (-1, 1)$. Suppose a minimizer of \eqref{ocf} with initial data $x_0$ exits and call it $\xi$. Fix $b >0$ such that $\xi(s) \in (-1, 1), \forall s \in [0,b]$, and define $$\displaystyle I_b[\xi] := \int_0^b e^{-s} \left( \frac{\left( \dot{\xi}(s) \right)^2}{2} + f(\xi (s)) \right)ds.$$
Since $\xi$ is a minimizer of \eqref{ocf}, $\xi$ is also a minimizer of the problem
\begin{equation}
    \min \left\{ I_b [\gamma] ; \gamma \in \mathrm{AC}\left([0, b]; [-1,1]\right) , \gamma(0) = x_0, \gamma(b) =\gamma(b) \right\}.
\end{equation}
Let $\zeta \in \mathrm{C}^\infty\left([0, b]; [-1,1]\right)$ with $ \zeta(0) = 0, \zeta(b) =0$ and $\tau \in \mathbb{R}$. Hence, $\xi(s) + \tau \zeta(s) \in (-1, 1)$, $ \forall s \in [0,b]$, for $|\tau|$ small enough. By calculus of variation, 
\begin{equation*}
\begin{aligned}
    \left.  0=\frac{d}{d\tau}\left( I[\xi +\tau \zeta]\right) \right|_{\tau=0} &=\int_0^b e^{-s} \left(\dot{\xi}(s)  \dot{\zeta}(s) + f^\prime(\xi(s)) \zeta(s) \right)ds\\
    &=\int_0^b -\left( e^{-s} \dot{\xi}(s) \right)^\prime \zeta(s) + e^{-s} f^\prime(\xi(s)) \zeta(s)ds\\
    &=\int_0^b \left( \dot{\xi} -\Ddot{\xi} +f^\prime (\xi)\right)\zeta e^{-s}ds.
\end{aligned}
\end{equation*}
Therefore, if $\xi$ is a minimizer of \eqref{ocf}, then 
\begin{equation}
\label{eqn:aEL}
\left\{
\begin{aligned}
     \Ddot{\xi}(s)&=\dot{\xi}(s) + f^\prime (\xi(s)), s\in (0,b),\\
     \xi(0)&=x_0.
\end{aligned}
\right.
\end{equation}
Note that we don't have any info about initial velocity here.

We will instead consider \eqref{eqn:aEL} with the initial position $x_0$ and the initial velocity $y_0$ (to be determined), i.e,
\begin{equation}
\label{eqn:aELv}
\left\{
\begin{aligned}
     \Ddot{\xi}(s)&=\dot{\xi}(s) + f^\prime (\xi(s)), s\in (0,b),\\
     \xi(0)&=x_0,\\
     \dot{\xi}(0)&=y_0.
\end{aligned}
\right.
\end{equation}
Change notations by letting $x(s)= \xi(s)$ and $y(s)=\dot{\xi}(s)$. Then \eqref{eqn:aELv} becomes
\begin{equation}
\label{eqn:ode}
\left\{
\begin{aligned}
      \dot{x}(s)&=y(s),\\
     \dot{y}(s)&=y(s) + f^\prime (x(s)),\\
     x(0)&=x_0,\\
     y(0)&=y_0.
\end{aligned}
\right.
\end{equation}

\textbf{STEP 2. Solve the Euler-Lagrange equation to get the formula for $\xi$ and compute the cost function value $I[\xi]$.} 


    
\begin{enumerate}
    \item Suppose $x_0 \in [-\frac{1}{2}, \frac{1}{2}]$. We can choose $\xi (s) \equiv x_0$. Then $I[\xi]=0$ and hence $\xi \equiv x_0$ is a minimizing curve. Thus, $u(x)=0$ for $x \in [-\frac{1}{2}, \frac{1}{2}]$.
    \item Suppose $x_0 \in (\frac{1}{2}, 1]$. We need to solve the Euler-Lagrange equation.

If $x(s) \in \left[\frac{1}{2}, 1\right]$, we have
\begin{equation*}
\begin{aligned}
 &\left\{
\begin{aligned}
\dot{x}(s)&=y(s),\\
     \dot{y}(s)&=y(s) + 2\left(x(s)-\frac{1}{2}\right),\\
     x(0)&=x_0,\\
     y(0)&=y_0,
\end{aligned}
\right.\\
\Longrightarrow &
\left\{
    \begin{aligned}
         	& x(s)=\frac{1}{2}+\left(\frac{x_0}{3}+\frac{y_0}{3}-\frac{1}{6}\right)e^{2t} + \left( \frac{2}{3}x_0-\frac{1}{3}y_0-\frac{1}{3}\right) e^{-t},\\
         	& y(s)=2\left(\frac{x_0}{3}+\frac{y_0}{3}-\frac{1}{6}\right)e^{2t} - \left( \frac{2}{3}x_0-\frac{1}{3}y_0-\frac{1}{3}\right) e^{-t}.
    \end{aligned}
\right.\\
\end{aligned}
\end{equation*}
To simplify the expression, we let 
\begin{equation*}
\begin{aligned}
     A &:=\frac{x_0}{3}+\frac{y_0}{3}-\frac{1}{6},\\
    B &:= \frac{2}{3}x_0-\frac{1}{3}y_0-\frac{1}{3},\\
\end{aligned}
\end{equation*}
and then
\begin{equation}
    \left\{
    \begin{aligned}
         	& x(s)=\frac{1}{2}+Ae^{2t} + B e^{-t},\\
         	& y(s)=2Ae^{2t} - B e^{-t}.
    \end{aligned}
\right.
\end{equation}
Set $\displaystyle x(t)=\frac{1}{2}$ and we can try to solve for the time $t_\frac{1}{2}$ when the position is at $x=\frac{1}{2}$:
\begin{equation*}
        \begin{aligned}
         &\frac{1}{2}+Ae^{2t} + B e^{-t} =\frac{1}{2},\\
         	\Longrightarrow \quad & t_\frac{1}{2}=\frac{1}{3} \log \left(-\frac{B}{A}\right).\\
    \end{aligned}
\end{equation*}

$t_{\frac{1}{2}}$ may or may not exist, depending on whether $\displaystyle -\frac{B}{A}$ is positive or not. We break into four cases according to the initial velocity $y_0$ as follows.
\begin{enumerate}
    \item Suppose $t_\frac{1}{2}$ exists. $\displaystyle -\frac{B}{A} > 0 , y_0 <0 \Rightarrow A<0 \Rightarrow y_0 < \frac{1-2x_0}{2}$. In this case, after the curve reaches $x=\frac{1}{2}$, we will let the curve stay at $x=\frac{1}{2}$ to minimize the cost.
    \begin{equation*}
    \begin{aligned}
        I[\xi]&= \int_0^\infty e^{-s}\left( \frac{y(s)^2}{2} +\left(x(s)-\frac{1}{2}\right)^2\right)ds\\
        &=\int_0^\infty e^{-s}\left( \frac{(2Ae^{2s}-Be^{-s})^2}{2} +(Ae^{2s}+Be^{-s})^2\right)ds\\
        &=\int_0^{t_\frac{1}{2}} 3A^2e^{3s}+\frac{3}{2}B^2e^{-3s}ds\\
        &=\left. A^2e^{3s}-\frac{B^2}{2}e^{-3s} \right|^{\frac{1}{3}\log \left(-\frac{B}{A}\right)}_0 \\
        &=-\frac{x_0y_0}{2}+\frac{1}{4}y_0\\
        \Longrightarrow \frac{\partial I[\xi]}{\partial y_0} &= -\frac{x_0}{2}+\frac{1}{4}<0.\\
    \end{aligned}
    \end{equation*}
    Therefore, $I[\xi]$ is decreasing with respect to $y_0$ when $\displaystyle y_0 < \frac{1-2x_0}{2}$. 
    
    \item Suppose $A=0$, i.e., $\displaystyle y_0=\frac{1-2x_0}{2}$. We have
    \begin{equation*}
     \left\{
        \begin{aligned}
        x(s)&=\frac{1}{2}+\left(x_0-\frac{1}{2}\right)e^{-s},\\
        y(s)&=-\left(x_0-\frac{1}{2}\right)e^{-s}.
        \end{aligned}
    \right.
    \end{equation*}
    As we can see here, the curve never reaches $x=\frac{1}{2}$. Compute the cost function
    \begin{equation*}
        \begin{aligned}
         I[\xi] &= \int_0^\infty e^{-s}\left( \frac{(x_0-\frac{1}{2})^2e^{-2s}}{2} +(x_0-\frac{1}{2})^2e^{-2s}\right)ds\\
          &=\frac{1}{2}x_0^2-\frac{1}{2}x_0+\frac{1}{8}.\\
        \end{aligned}
    \end{equation*}
    
    \item Suppose $A >0, B \geq 0$, i.e., $\displaystyle \frac{1-2x_0}{2} < y_0 \leq 2x_0-1$. We claim that in this case we can always find a curve $\gamma$ that $I[\gamma] < I[\xi]$. 
    
    If we start at $x_0 \in (\frac{1}{2}, 1)$ and evolve according to the Euler-Lagrange equation, the velocity $\dot{\xi}(s) = y(s)$ may change sign as time goes, for instance, from negative to positive. We can consider another curve $\gamma(s)$ such that $\gamma(0)=x_0$ and $\dot{\gamma}(s)=-|\dot{\xi}(s)|$ for $s\in \left\{s \in [0, \infty) :\xi(s)=x(s)<1\right\}$ and stay at $x=\frac{1}{2}$ once the curve $\gamma(s)=\frac{1}{2}$. If $\xi$ arrives at $1$ before $\gamma$ hits $\frac{1}{2}$, there are two possibilities for $\xi$ afterwards. The first possibility is that $\xi$ can stay at $1$. Then we can choose $\gamma$ so that $\displaystyle \frac{\dot{\gamma}(s)^2}{2}+\left(\gamma(s)-\frac{1}{2}\right)^2<f(1)$ to make $I[\gamma] < I[\xi]$. The second possibility is that $\xi$ can move away from $1$. Then the strategy is like before, namely that we choose $\dot{\gamma}(s)=-|\dot{\xi}(s)|$ to get $I[\gamma] < I[\xi]$. 
    
    If we start at $x_0=1$, we can choose to stay at $x=1$, i.e., $\gamma (s) \equiv 1$. Then $\displaystyle I[\gamma]=\frac{1}{4}$. If we choose $\displaystyle y_0=\frac{1-2x_0}{2}$ as in the Case (b), $\displaystyle I[\xi]=\frac{1}{8}$.
    \item Suppose $y_0 > 2x_0-1$. In this case, $A >0, B <0$, and $y(s) = 2Ae^{2s}- Be^{-s} >0$. Hence, the curve $\xi$ will run towards $x=1$. Again, the strategy is to make $\dot{\gamma}(s)=-\dot{\xi}(s)$ and the argument is similar to that in the Case (c).
\end{enumerate}
In summary, the minimizing curve is $\displaystyle \xi (s) = \frac{1}{2}+\left( x_0 -\frac{1}{2}\right) e^{-s}$, which is the solution to the Euler-Lagrange equations with the initial velocity $\displaystyle y_0=\frac{1-2x_0}{2}$. Hence, $\displaystyle u(x)= \frac{1}{2}x^2-\frac{1}{2}x+\frac{1}{8}$ for $x \in (\frac{1}{2}, 1]$.
    \item Suppose $x_0 \in \left[-1, -\frac{1}{2}\right]$.
    If $x(s) \in \left[-1, -\frac{1}{2}\right]$, we have
    
\begin{equation*}
\begin{aligned}
 &\left\{
\begin{aligned}
\dot{x}(s)&=y(s),\\
     \dot{y}(s)&=y(s) + 2\left(x(s)+\frac{1}{2}\right),\\
     x(0)&=x_0,\\
     y(0)&=y_0,
\end{aligned}
\right.\\
\Longrightarrow &
\left\{
    \begin{aligned}
         	& x(s)=-\frac{1}{2}+\left(\frac{x_0}{3}+\frac{y_0}{3}+\frac{1}{6}\right)e^{2t} + \left( \frac{2}{3}x_0-\frac{1}{3}y_0+\frac{1}{3}\right) e^{-t},\\
         	& y(s)=2\left(\frac{x_0}{3}+\frac{y_0}{3}+\frac{1}{6}\right)e^{2t} - \left( \frac{2}{3}x_0-\frac{1}{3}y_0+\frac{1}{3}\right) e^{-t}.
    \end{aligned}
\right.\\
\end{aligned}
\end{equation*}
Similarly, we can show the minimizing curve is $\displaystyle \xi(s)=-\frac{1}{2}+\left(x_0+\frac{1}{2}\right)e^{-s}$ and $\displaystyle u(x)=\frac{1}{2}x^2+\frac{1}{2}x+\frac{1}{8}$ for $x \in \left[-1, -\frac{1}{2}\right]$.
\end{enumerate}
Therefore, 
\begin{equation*}
    u(x):=\left\{\begin{aligned}
     &\frac{1}{2}\left(|x|-\frac{1}{2}\right)^2 \quad \text{ in } \left[-1, -\frac{1}{2}\right]\cup \left[\frac{1}{2},1\right]\\
     &0 \qquad \qquad \qquad \text{ in } \left(-\frac{1}{2}, \frac{1}{2}\right).
    \end{aligned}
    \right.
\end{equation*}

\section*{Acknowledgement}
I would like to express thanks to Hung V. Tran for suggesting the problem, helpful conversations and invaluable advice.

\bibliographystyle{apalike}
\bibliography{ref}
\end{document}